\newtheorem{definition}{Definition}
\newtheorem{proposition}{Proposition}
\newcommand{\Act}{\mathbf{Act}}
\newcommand{\Ptr}{\mathbf{Ptr}}
\begin{document}

\title{Unitary magma actions}
\author{Nelson Martins-Ferreira\footnote{Polytechnic of Leiria}}
\date{\today}

\maketitle

\begin{abstract}

We introduce a novel concept of action for unitary magmas, facilitating the classification of various split extensions within this algebraic structure. Our method expands upon the recent study of split extensions and semidirect products of unitary magmas conducted by Gran, Janelidze, and Sobral.
Building on their research, we explore split extensions in which the middle object does not necessarily maintain a bijective correspondence with the Cartesian product of its end objects.
Although this phenomenon is not observed in groups or any associative semiabelian variety of universal algebra, it shares similarities with instances found in monoids through weakly Schreier extensions and certain exotic non-associative algebras, such as semi-left-loops.
Our work seeks to contribute to the comprehension of split extensions in unitary magmas and may offer valuable insights for potential abstractions of categorical properties in more general contexts.

\end{abstract}

\section{Introduction}

Recall that a unitary magma is a system $B=(B,+,0)$ consisting of a set $B$, a neutral element $0\in B$ and a binary operation $+$ with $b+0=b=0+b$ for all $b\in B$. As is well known, the neutral element is unique provided it exists and will occasionally be treated as a property (only for convenience). A morphism of unitary magmas from $B=(B,+,0)$ to $B'=(B',+,0)$ is a map of the underlying sets $f\colon{B\to B'}$ such that $f(b_1+b_2)=f(b_1)+f(b_2)$, for all $b_1,b_2\in B$, and $f(0)=0$. 

In recent years, the study of split extensions outside the classical algebraic structures has gained significant attention. This work focuses on unitary magmas, presenting a novel approach to classify a broad range of split extensions, including those not previously accounted for in existing literature.

Instead of split extensions of unitary magmas with a retraction map, as considered in \cite{gran2019},  we will consider a comparable but more general notion which will be called a \emph{retraction point}. A retraction point consists of a split epimorphism of unitary magmas (i.e. a point)
\begin{equation*}
\xymatrix{A \ar@<.5ex>[r]^(.5){p}  & B \ar@<.5ex>[l]^(.5){s} },\quad ps=1_B,
\end{equation*}
equipped with an injective map $k$, from a given set $X$ to the underlying set of $A=(A,+,0)$ and a retraction map $q\colon{A\to X}$ such that $kq(a)+sp(a)=a$ for all $a\in A$ (see Definition~\ref{def:Sch point} for more details).

We will say that the system $(A,k,q,s,p)$ is a retraction point from the set $X$ to the unitary magma $B$. The order used in the tuple $(A,k,q,s,p)$ reflects the identity $1_A=kq+sp$ which is a fundamental identity in the notion of semibiproduct recently introduced in \cite{martinsferreira2022,
martinsferreira2023a}.

The motivation for considering a set $X$ instead of a unitary magma is rooted in the fact that, unlike the split extensions studied in \cite{gran2019}, which are classified by actions of the form $\xi : B \times X \to X$, retraction points will be classified by actions of the form $\varphi : X \times B \times X \times B \to X$, as detailed in Definition~\ref{def: action}. The retraction points that are split extensions in the sense of \cite{gran2019} constitute a special case where $\varphi(x, b, x', b') = x + \xi(b, x')$ for a given unitary magma structure $(X,+,0)$. 
However, in the general case, $X$ is a set that is tacitly assumed to be a unitary magma with a neutral element $q(0) \in X$ and a magma operation $x + x' = q(k(x) + k(x'))$ whenever a retraction point $(A,k,q,s,p)$ is given from $X$ to $B$.

Compared to the split extensions examined in \cite{gran2019}, where the underlying set is bijective to $X\times B$, retraction points introduce a notable distinction. The underlying set of a retraction point is bijective to a specialized subset of $X\times B$, in which pairs $(x,b)$ must meet the condition $q(k(x)+s(b))=x$. This distinct property separates retraction points from their (Schreier) split extension counterparts.

An important aspect pertained to morphisms of retraction points is clarified. Such aspect is not evident when the condition $q(k(x)+s(b))=x$ is imposed for all $x \in X$ and $b \in B$, as seen in \cite{gran2019}, which is a natural condition when abstracting from groups or semiabelian varieties but not from monoids, unitary magmas or unital categories. The study of weakly Schreier split extensions, especially in the category of monoids, has been motivated by removing this condition (see e.g. \cite{graham} and refrences there).
For a retraction point $(A, k, q, s, p)$ from the set $X$ to the unitary magma $B$, the pair $(k,s)$ is jointly strongly epimorphic (as established in Proposition \ref{prop: retraction point properties}). Consequently, any morphism compatible with $k$ and $s$ is automatically compatible with $p$. However, compatibility with $q$ may not be guaranteed and can potentially differ.
This yields two distinct relations between retraction points with fixed ends.

When a set $X$ and a unitary magma $B$ are fixed, the set of all retraction points from $X$ to $B$, denoted by $\Ptr(X,B)$, is equipped with two binary relations:
\begin{enumerate}
    \item  $(A,k,q,s,p)\leq (A',k',q',s',p')$  \emph{iff} there exists a morphism $\alpha\colon{A\to A'}$ such that $\alpha k=k'$ and $\alpha s=s'$. 
\item $(A,k,q,s,p)\sim (A',k',q',s',p')$  \emph{iff} there exists a morphism $\alpha\colon{A\to A'}$ such that $\alpha k=k'$, $\alpha s=s'$ and $q'\alpha=q$.   

\end{enumerate}

The relation $\leq$ is a preorder and will be analysed in a forthcoming paper together with some connections with topology (in the spirit of \cite{fibrous}) which are motivated by the list of classical examples displayed in Section \ref{sec: egs}.

The relation~$\sim$ is an equivalence relation (Proposition \ref{prop: ssfl for ptr}). The primary contribution of this paper (Proposition \ref{prop: main thm}) is the classification which states that
$$\Act(X,B)\cong\Ptr(X,B)\slash\sim$$
where $\Act(X,B)$ denotes the collection of all $B$-actions with underlying set $X$.

This work is organized along the following lines. Firstly, the definition of retraction point is provided, along with some of its properties. 
Secondly, the notion of action of unitary magmas is introduced together with a procedure for associating an action to every retraction point. Moreover, every action is shown to induce a semidirect product of unitary magmas which may be seen as a canonical representative for the class  of its equivalent retraction points. Finally, it is shown that several particular cases become closer to the familiar formulas for group or monoid actions together with some examples and counter-examples. In particular is is shown that the unit sphere can be seen as a retraction point from the unit circle to the closed interval $[-1,1]$ with a suitable unitary magma structure.

\section{Retraction points of unitary magmas}\label{sec: retraction points}

The name retraction point stems from a split extension (also called a point) equipped with a retraction map thus making it a semibiproduct  \cite{martinsferreira2022,
martinsferreira2023a}.

\begin{definition}\label{def:Sch point}
Let $X$ be a set and $B=(B,+,0)$ a unitary magma. A \emph{retraction point} from $X$ to $(B,+,0)$ is a tuple $(A,k,q,s,p)$, represented as a (semibiproduct) diagram
\begin{equation}\label{diag: semibiproduct 1}
\xymatrix{X\ar@{.>}@<-.5ex>[r]_(.5){k} & A \ar@<.5ex>[r]^(.5){p} \ar@{.>}@<-.5ex>[l]_(.5){q} & B \ar@<.5ex>[l]^(.5){s} },\quad kq+sp=1_A,
\end{equation}
where $q$ and $k$ are maps, $A=(A,+,0)$ is a unitary magma, $p$ and $s$ are morphisms of unitary magmas, and moreover, $ps=1_B$, $qk=1_X$, $p(k(x))=0\in B$ for all $x\in X$, and $q(s(b))=q(0)$ for all $b\in B$.
\end{definition}

Given a retraction point from $X$ to $(B,+,0)$ we will tacitly assume the (unique) unitary magma structure on $X$ for which the map $k$ is a morphism. Then $k$ is a kernel of $p$  and is equipped with a retraction map $q$. The condition $kq+sp=1_A$ is called the semibiproduct condition \cite{martinsferreira2022,
martinsferreira2023a}. Depending on context we will sometimes write $A$ for a unitary magma $A=(A,+,0)$ or for a retraction point $A=(A,k,q,s,p)$. We will often write $B$ in the place of $(B,+,0)$.

\begin{proposition}\label{prop: retraction point properties}
Let $X$ be a set, $B=(B,+,0)$ a unitary magma and consider any retraction point $(A,k,q,s,p)$ from $X$ to $B$:
\begin{enumerate}
\item for all $x,x'\in X$ and $b,b'\in B$:
\begin{enumerate}
    \item $kq(0)=0$ and $q(k(x)+kq(0))=x=q(kq(0)+k(x))$;
    \item $q(k(x)+s(b))=q((k(x)+0)+(0+s(b))=q((k(x)+s(b))+(0+0)=q((0+0)+(k(x)+s(b))$;
    \item $q(s(b)+s(b'))=0$;
    \item $q((k(x)+s(b))+(k(x')+s(b')))=q(k(w)+s(b+b'))$, with $w=q((k(u)+s(b))+(k(v)+s(b')))$, $u=q(k(x)+s(b))$ and $v=q(k(x')+s(b'))$;
\end{enumerate}
\item when $k$ is seen as a morphism then it is the kernel of $p$, the pair $(k,s)$ is jointly strongly epimorphic and $p$ is the cokernel of $k$;
\item for every unitary magma $Z$, every map $u\colon{X\to Z}$ and every morphism $v\colon{B\to Z}$:
\begin{enumerate}
\item there exists at most one morphism $w\colon{A\to Z}$ such that $wk=u$ and $ws=v$;
\item if there exists a morphism $w\colon{A\to Z}$, with $wk=u$ and $ws=v$, then $w(a)=uq(a)+vp(a)$, for all $a\in A$, and $uq(k(x)+k(x'))=u(x)+u(x')$, for all $x,x'\in X$;
\item there exists a morphism $w\colon{A\to Z}$ with $wk=u$ and $ws=v$ if and only if
\begin{equation}
uq(a+a')+vp(a+a')=(uq(a)+vp(a))+(uq(a')+vp(a'))
\end{equation}
for every $a,a'\in A$;
\end{enumerate}
\item for every unitary magma $Z$, every map $f\colon{Z\to X}$ and every morphism $g\colon{Z\to B}$:
\begin{enumerate}
\item there exists at most one morphism $h\colon{Z\to A}$ for which $qh=f$ and $ph=g$;
\item if there exists a morphism $h\colon{Z\to A}$, with $qh=f$ and $ph=g$, then $h(z)=kf(z)+sg(z)$, for all $z\in Z$;
\item there exists a morphism $h\colon{Z\to A}$ with $qh=f$ and $ph=g$ if and only if
\begin{equation}
kf(z+z')+sp(z+z')=(kf(z)+sg(z))+(kf(z')+sg(z'))
\end{equation}
for all $z,z'\in Z$;
\end{enumerate}
\item the pullback of any morphism $g\colon{Z\to B}$ along $p\colon{A\to B}$ induces a retraction point $(A\times_{B}Z,\langle k,0\rangle,q\pi_1,\langle sg,1\rangle,\pi_2)$ as displayed
\begin{equation}\label{diag: pullback stability}
\xymatrix{X\ar[d]_{1_X} \ar[r]^(.44){\langle k,0\rangle} & A\times_{B} Z \ar[d]_{\pi_1} \ar@<.5ex>[r]^{\pi_2} & Z\ar[d]_{g} \ar@<.5ex>[l]^(.44){\langle sg,1\rangle}
\\
X\ar[r]^{k} & A \ar@<.5ex>[r]^{p} & B \ar@<.5ex>[l]^{s}}
\end{equation}
\item a retraction point of the form $(B,k',q',s',p')$, say from $Y$ to $C=(C,+,0)$, is composable with $(A,k,q,s,p)$ if and only if
\begin{equation}\label{eq: stable under composition}
(k(x)+sk'(y))+ss'(c)=k(x)+s(k'(y)+s'(c))
\end{equation}
for all $x\in X$, $y\in Y$ and $c\in C$;
\item if $k(x)+s(b+b')=(k(x)+s(b))+s(b')$ for all $x\in X$ and $b,b'\in B$ then every retraction point of the form $(B,k',q',s',p')$ is composable with $(A,k,q,s,p)$;
\item the map $\langle q,p\rangle\colon{A\to X\times B}$ is a bijection if and only if $q(k(x)+s(b))=x$ for all $x\in X$ and $b\in B$.
\end{enumerate}
\end{proposition}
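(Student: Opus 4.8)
The plan is to exhibit an explicit candidate for the inverse of $\langle q,p\rangle$, namely the map $\sigma\colon{X\times B\to A}$ defined on underlying sets by $\sigma(x,b)=k(x)+s(b)$, and to show that one of the two composites with $\langle q,p\rangle$ is always the identity while the other is the identity precisely under the stated condition.

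First I would check that $\sigma\circ\langle q,p\rangle=1_A$ unconditionally: for $a\in A$ we have $\sigma(q(a),p(a))=k(q(a))+s(p(a))=kq(a)+sp(a)=a$ by the semibiproduct condition in Definition~\ref{def:Sch point}. In particular $\langle q,p\rangle$ is always injective. Next I would compute the other composite coordinate by coordinate. Since $p$ is a morphism with $ps=1_B$ and, by the definition of a retraction point, $p(k(x))=0$, we get $p\sigma(x,b)=p(k(x))+p(s(b))=0+b=b$ for all $x,b$; so the second coordinate of $\langle q,p\rangle\sigma(x,b)$ is always correct. The first coordinate is $q\sigma(x,b)=q(k(x)+s(b))$, which equals $x$ for all $x\in X$, $b\in B$ if and only if the displayed condition $q(k(x)+s(b))=x$ holds. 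Hence $\langle q,p\rangle\circ\sigma=1_{X\times B}$ is equivalent to that condition.

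For the ``if'' direction this already finishes the argument: when $q(k(x)+s(b))=x$ for all $x,b$, the map $\sigma$ is a two-sided set-theoretic inverse of $\langle q,p\rangle$, so $\langle q,p\rangle$ is a bijection. For the ``only if'' direction I would argue directly: if $\langle q,p\rangle$ is a bijection then it is surjective, so given $(x,b)\in X\times B$ there is $a\in A$ with $q(a)=x$ and $p(a)=b$; applying the semibiproduct identity once more, $a=kq(a)+sp(a)=k(x)+s(b)$, whence $q(k(x)+s(b))=q(a)=x$. (Equivalently, a bijection has a unique inverse and $\sigma$ is already a one-sided inverse, so it must be the two-sided inverse.)

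There is essentially no serious obstacle here; the proof is a direct verification using the identities packaged in the definition of a retraction point. The only point worth care is that $\sigma$ is merely a map of underlying sets and need not be a morphism of unitary magmas in general --- but this is harmless, since the statement only asserts that $\langle q,p\rangle$ is a bijection of sets, and the potential failure of $\sigma$ to be a morphism is exactly what distinguishes the general retraction point from the Schreier split extensions of \cite{gran2019}.
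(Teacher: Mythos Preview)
Your argument for item~(8) is correct and is precisely the natural one: exhibit $\sigma(x,b)=k(x)+s(b)$, observe $\sigma\langle q,p\rangle=1_A$ from the semibiproduct identity, and then show $\langle q,p\rangle\sigma=1_{X\times B}$ holds iff $q(k(x)+s(b))=x$. The paper does not write out a proof of this item (its proof only details items (1), (2), (5), (6) and leaves the rest as routine), so there is nothing to compare against; your write-up is exactly what one would supply to fill the gap, and your remark that $\sigma$ is only a map of sets is the right caveat.
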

\begin{proof}
\begin{enumerate}
\item It is immediate to observe $kq(0)=kq(0)+sp(0)=0$ while the other conditions are easily checked. This will be used in Proposition \ref{prop: every retraction point is an action} to show that every retraction point induces an action via the assignment $(x,b,x',b')\mapsto q((k(x)+s(b))+(k(x')+s(b')))$. 
\item
The map $k$ becomes a morphism of unitary magmas as soon as $X$ assumes the tacit structure in which $q(0)$ is the neutral element whereas the operation is $x+x'=q(k(x)+k(x'))$. Indeed, $k(0)=k(q(0))=kq(0)+0=kq(0)+sp(0)=0\in A$ and since $pk=0$,  $k(x+x')=k(x)+k(x')$ is equivalent to $qk(x+x')=q(k(x)+k(x'))$ which holds if and only if $x+x'=q(k(x)+q(x'))$. The morphism $k$ is the kernel of $p$ since every morphism $f\colon{Z\to A}$ with $pf=0$ factors uniquely through $X$ via the map $qf$ which is necessarily a morphism of unitary magmas. Indeed, $kqf=kqf+0=kqf+spf=(kq+sp)f=f$ and  $qf(z+z')=qf(z)+qf(z')$. The fact that every $a\in A$ can be decomposed as $a=kq(a)+sp(a)$ ensures that $(k,s)$ is jointly strongly epimorphic and as a consequence, since we also have $pk=0$, the morphism $p$ is the cokernel of $k$. 

\item It is easily checked that $(A\times_{B}Z,\langle k,0\rangle,q\pi_1,\langle sg,1\rangle,\pi_2)$ is a retraction point. Indeed, for every  $(a,z)\in A\times Z$ with $p(a)=g(z)$ we have $(a,z)=(kq(a)+sp(a),z)=(kq(a)+sg(z),z)=(kq(a),0)+(sg(z),z)$. 

\item Recall that a retraction point $(B,k',q',s',p')$ from a set $Y$ to a unitary magma $C$ is said to be composable (see Proposition 4.5 in \cite{martinsferreira2023a}, see also \cite{gran2019}) with $(A,k,q,s,p)$ as soon as the tuple $(A,\pi_1,q'',ss',p'p)$, where $$q''(a)=(kq(a)+sk'q'p(a),q'p(a)),$$ is a retraction point from  $A\times_B Y$ to $C$ as illustrated.
    \begin{eqnarray}
        \vcenter{\xymatrix{ & A\times_B Y\ar@{->}@<-0.5ex>[d]_{\pi_1}\ar@<.5ex>[r]^(.6){\pi_2} & Y\ar@<-.5ex>@{->}[d]_{k'} \ar@{->}@<.5ex>@{->}[l]^(.35){\langle sk',1\rangle}\\
        X\ar@<-.5ex>[r]_{k} & A \ar@<-.5ex>@{..>}[l]_{q} \ar@<-.5ex>@{..>}[u]_{q''} \ar@<.5ex>[r]^{p} \ar@<.5ex>[d]^{p'p}  & B \ar@<.5ex>@{->}[l]^{s} \ar@{->}@<-.5ex>@{..>}[u]_{q'} \ar@<.5ex>[d]^{p'}\\
        & C \ar@<.5ex>@{->}[u]^{ss'} \ar@{=}[r] & C\ar@<.5ex>[u]^{s'}}}
    \end{eqnarray}
    The map $q''$ is well defined, $p(kq(a)+sk'q'p(a))=k'q'p(a)$ for all $a\in A$. Moreover, $p'pss'=1_C$, $p'p\pi_1=p'k'\pi_2=0$, $q''ss'=0$ and we observe
    \begin{eqnarray*}
        q''\pi_1 &=&\langle kq+sk'q'p,q'p \rangle \pi_1\\
        &=&\langle kq\pi_1+sk'q'p\pi_1,q'p\pi_1 \rangle \\
        &=&\langle kq\pi_1+sk'q'k'\pi_2,q'k'\pi_2 \rangle \\
        &=&\langle kq\pi_1+sp\pi_1,\pi_2 \rangle \\
        &=&\langle \pi_1,\pi_2 \rangle =1_{A\times_B Y}.
    \end{eqnarray*}
    It remains to analyse the condition $\pi_1q''+ss'p'p=1_A$. 
    
    Since $s=s1_B=s(k'q'+s'p')=sk'q'+ss'p'$  we have 
    \begin{align*}
     \pi_1q''+ss'p'p &=\pi_1q''+(sk'q'+ss'p')s'p'p\\
     &=\pi_1q''+(sk'q's'p'p+ss'p's'p'p)\\
     &=\pi_1q''+ss'p'p\\     
     &=(kq+sk'q'p)+ss'p'p
    \end{align*}
and by hypotheses   $(kq+sk'q'p)+ss'p'p=kq+(sk'q'p+ss'p'p)$ from which the desired result follows as
\begin{align*}
\pi_1q''+ss'p'p &= kq+(sk'q'p+ss'p'p)\\
&=kq+s(k'q'p+s'p'p)\\
&=kq+sp\\
&=1_A.    
\end{align*}
 Conversely, having $\pi_1q''+ss'p'p=1_A$, on the one hand we observe that $(kq+sk'q'p)+ss'p'p=1_A$ while on the other hand, since $s=sk'q'+ss'p'$, we observe $1_A=kq+sp=kq+(sk'q'p+ss'p'p)$. So, by choosing $x=q(a)$, $y=q'p(a)$ and $c=p''(a)$ we see that condition (\ref{eq: stable under composition}) holds.
\end{enumerate}
\end{proof}

The following result can be seen as the {Split Short Five Lemma for retraction points} of unitary magmas.

\begin{proposition}\label{prop: ssfl for ptr}
    Let $(A,k,q,s,p)$ and $(A',k',q',s',p')$ be two retraction points from $X$ to $B$. If there exists a morphism $\alpha\colon{A\to A'}$ such that $\alpha k=k'$, $\alpha s=s'$ and $q'\alpha =q$ then $\alpha$ is an isomorphism. 
\end{proposition}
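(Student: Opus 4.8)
The plan is to exhibit an explicit two-sided set-theoretic inverse for $\alpha$ and then observe that a bijective morphism of unitary magmas is automatically an isomorphism of retraction points.

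First I would record that $p'\alpha = p$. Indeed $p'\alpha k = p'k' = 0 = pk$ and $p'\alpha s = p's' = 1_B = ps$, so both $p$ and $p'\alpha$ are morphisms $A\to B$ restricting to $0$ along $k$ and to $1_B$ along $s$; by item (3)(a) of Proposition~\ref{prop: retraction point properties} such a morphism is unique (equivalently, the pair $(k,s)$ is jointly strongly epimorphic), whence $p'\alpha = p$.

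Next I would define the candidate inverse $\beta\colon A'\to A$ by $\beta(a')=kq'(a')+sp'(a')$. Since $\alpha$ is a morphism with $\alpha k=k'$ and $\alpha s=s'$, we get $\alpha\beta(a')=\alpha(kq'(a'))+\alpha(sp'(a'))=k'q'(a')+s'p'(a')=a'$ using the semibiproduct identity $k'q'+s'p'=1_{A'}$; hence $\alpha\beta=1_{A'}$. Conversely $\beta\alpha(a)=kq'\alpha(a)+sp'\alpha(a)=kq(a)+sp(a)=a$, using $q'\alpha=q$, the identity $p'\alpha=p$ just established, and the semibiproduct identity $kq+sp=1_A$; hence $\beta\alpha=1_A$. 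Thus $\alpha$ is a bijection of underlying sets with inverse $\beta$. It then remains only to note that the set-theoretic inverse of a bijective morphism of unitary magmas is again a morphism: $\beta(0)=0$ because $\alpha(0)=0$, and for $a'_1,a'_2\in A'$, writing $a'_i=\alpha(a_i)$, one has $\beta(a'_1+a'_2)=\beta(\alpha(a_1)+\alpha(a_2))=\beta(\alpha(a_1+a_2))=a_1+a_2=\beta(a'_1)+\beta(a'_2)$. Hence $\alpha$ is an isomorphism of unitary magmas, and since it is already compatible with $k,q,s,p$ (and $\alpha^{-1}=\beta$ is then compatible in the reverse direction), it is an isomorphism of retraction points.

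The only mildly delicate step is the first one, $p'\alpha=p$: this is precisely where the retraction-point structure is genuinely used, via the joint strong epimorphicity of $(k,s)$ (equivalently the uniqueness clause of Proposition~\ref{prop: retraction point properties}). Everything else follows formally from the two semibiproduct identities $kq+sp=1_A$ and $k'q'+s'p'=1_{A'}$, so I do not expect any serious obstacle.
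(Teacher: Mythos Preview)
Your proof is correct and follows essentially the same approach as the paper: both establish $p'\alpha=p$ from the joint strong epimorphicity of $(k,s)$, define the inverse map $\beta(a')=kq'(a')+sp'(a')$, and verify $\alpha\beta=1_{A'}$ and $\beta\alpha=1_A$ via the two semibiproduct identities. You add the explicit verification that $\beta$ is a magma morphism (which the paper leaves implicit, relying on the fact that in a variety of algebras a bijective morphism is automatically an isomorphism), but this is a harmless elaboration rather than a different route.
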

\begin{proof}
    Since $\alpha k=k'$, $\alpha s=s'$ we also have that $p' \alpha=p$ since $(k,s)$ is jointly strongly epic. In order to show that $\alpha$ is an isomorphism it suffices to show that it has an inverse. The map $\beta(y)=kq'(y)+sp'(y)$, with $y\in A'$ is invertible and its inverse is $\alpha$. Indeed, $\alpha(\beta(y))=\alpha(kq'(y)+sp'(y))=k'q'(y)+s'p'(y)=y$ for all $y\in A'$, and $\beta(\alpha(a))=kq'\alpha(a)+sp'\alpha(a)=kq(a)+sp(a)=a$, for all $a\in A$.
\end{proof}

This means that the relation $A\sim A'$ if and only if there exists a morphism $\alpha\colon{A\to A'}$ such that $\alpha k=k'$, $\alpha s=s'$ and $q'\alpha =q$ is an equivalence relation. A notion of $B$-action will be introduced in the following section and it will be shown that there is a one-to-one correspondence between the set of all $B$-actions with underlying set $X$ and the set of equivalence classes $\Ptr(X,B)\slash\sim$ for the equivalence relation $\sim$.

If in the definition of $\sim$ we do not require the compatibility of $\alpha$ with $q$ and $q'$, that is, if condition $q'\alpha=q$ is not a requirement, then the relation is no longer an equivalence relation but merely a preorder. 

For further reference we formalize the notion of equivalent retraction points.

\begin{definition}\label{def: equiv for retraction points}
Let  $A=(A,k,q,s,p)$ and $A'=(A',k',q',s',p')$ be two retraction points from $X$ to $B$. We will say that $A$ is equivalent to $A'$, written as $A\sim A'$, if (and only if) there exists a morphism $\alpha\colon{A\to A'}$ such that $\alpha k=k'$, $\alpha s=s'$ and $q'\alpha =q$.
\end{definition}

\section{Unitary magma actions}

In this section, we consider a fixed unitary magma $B=(B,+,0)$ and introduce a map $\varphi: X \times B \times X \times B \to X$, where $X$ is a set containing an element $0_X$. For simplicity, we will write $\varphi(0_X,0,0_X,0)$  as $\varphi(0,0,0,0)$. We believe this notation is clear and there is no need to write $0_X$ for $0\in X$ nor $0_B$ for $0\in B$.

\begin{definition}\label{def: action} Let $B=(B,+,0)$ be a unitary magma. A \emph{$B$-action} is a pair $(X,\varphi)$ where $X$ is a set, $$\varphi\colon X\times B\times X\times B\to X$$ is a map, and:
\begin{enumerate}
\item there exists $0\in X$ such that for all $x\in X$,
\begin{equation}\label{eq:act1}
\varphi(x,0,0,0)=x=\varphi(0,0,x,0);
\end{equation}
\item for all $x\in X$ and $b\in B$
\begin{equation}\label{eq:act2}
\varphi(x,b,0,0)=\varphi(x,0,0,b)=\varphi(0,0,x,b);
\end{equation}
\item for  all $b,b'\in B$
\begin{equation}\label{eq:act3}
\varphi(0,b,0,b')=0;
\end{equation}
\item the map $\varphi_{00}(x,b)=\varphi(x,0,0,b)$ is such that for all $x,x'\in X$ and  $b,b'\in B$,  
\begin{equation}\label{eq:act4}
\varphi(x,b,x',b')=\varphi_{00}(\varphi(\varphi_{00}(x,b),
b,\varphi_{00}(x',b'),b'),b+b').
\end{equation}
\end{enumerate}
\end{definition}

We will often use $\varphi_{00}(x,b)$ as a shorthand for $\varphi(x,0,0,b)$.
The following proposition shows that the subset of $X\times B$, consisting of those pairs $(x,b)$ for which $\varphi_{00}(x,b)=x$, is closed under the magma operation
\begin{equation}\label{eq:magma op 1}
(x,b)+(x',b')=(\varphi(x,b,x',b'),b+b')
\end{equation}
which is defined for all pairs $(x,b),(x',b')\in X\times B$.

\begin{proposition}\label{prop: well defined semidirect prod} Let $(X,\varphi)$ be a $B$-action. The set 
\begin{equation}\label{eq: semidirect prod - the set}
X\rtimes_{\varphi} B=\{(x,b)\in X\times B\mid \varphi(x,0,0,b)=x\}
\end{equation}
is closed under the binary operation displayed in equation (\ref{eq:magma op 1}) and moreover $(X\rtimes_{\varphi} B,+,(0,0))$ is a unitary magma.
\end{proposition}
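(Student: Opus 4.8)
The plan is to verify the two assertions of the proposition separately: first that $X\rtimes_\varphi B$ is closed under the operation (\ref{eq:magma op 1}), and second that $((0,0))$ acts as a two-sided neutral element. For closure, I would take $(x,b),(x',b')\in X\rtimes_\varphi B$, so that $\varphi_{00}(x,b)=x$ and $\varphi_{00}(x',b')=x'$, and I must show that $(\varphi(x,b,x',b'),b+b')$ again lies in the set, i.e.\ that $\varphi_{00}(\varphi(x,b,x',b'),b+b')=\varphi(x,b,x',b')$. The key tool is axiom (\ref{eq:act4}), which expresses $\varphi(x,b,x',b')$ as $\varphi_{00}$ applied to something at parameter $b+b'$; the idea is that applying $\varphi_{00}(-,b+b')$ once more should be idempotent on such elements. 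More precisely, writing $z=\varphi(x,b,x',b')$, by (\ref{eq:act4}) we have $z=\varphi_{00}(w,b+b')$ for $w=\varphi(\varphi_{00}(x,b),b,\varphi_{00}(x',b'),b')=\varphi(x,b,x',b')=z$ (using the hypotheses $\varphi_{00}(x,b)=x$, $\varphi_{00}(x',b')=x'$). So $z=\varphi_{00}(z,b+b')$, which is exactly the required membership condition.

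Next, for the neutral element: I must check $(0,0)$ lies in $X\rtimes_\varphi B$, which follows from (\ref{eq:act3}) with $b=b'=0$ giving $\varphi_{00}(0,0)=\varphi(0,0,0,0)=0$. Then for any $(x,b)\in X\rtimes_\varphi B$ I need $(x,b)+(0,0)=(x,b)$ and $(0,0)+(x,b)=(x,b)$. The second components are immediate since $b+0=b=0+b$ in $B$. For the first components I must show $\varphi(x,b,0,0)=x$ and $\varphi(0,0,x,b)=x$. By (\ref{eq:act2}), $\varphi(x,b,0,0)=\varphi(x,0,0,b)=\varphi_{00}(x,b)$, and $\varphi(0,0,x,b)=\varphi_{00}(x,b)$ as well; and since $(x,b)\in X\rtimes_\varphi B$ we have $\varphi_{00}(x,b)=x$, so both reduce to $x$ as needed.

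I do not expect a serious obstacle here; the proof is essentially an unwinding of the axioms. The one point that requires a little care is the closure step, where it matters that (\ref{eq:act4}) is written so that the inner arguments pass through $\varphi_{00}(x,b)$ and $\varphi_{00}(x',b')$ rather than $x$ and $x'$ directly: it is precisely the membership hypotheses that let us collapse $w$ back to $z$ and conclude idempotency. If (\ref{eq:act4}) held only for pairs already in the set one would need to argue more carefully, but as stated it holds for all $(x,b),(x',b')\in X\times B$, so the substitution is legitimate. The remaining verifications are routine applications of (\ref{eq:act2}) and (\ref{eq:act3}) together with the unit laws of $B$.
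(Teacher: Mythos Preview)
Your proof is correct and follows essentially the same approach as the paper: closure is deduced from axiom~(\ref{eq:act4}) by substituting the membership hypotheses $\varphi_{00}(x,b)=x$ and $\varphi_{00}(x',b')=x'$, and the neutral element is checked via~(\ref{eq:act2}). You have simply unpacked the argument in more detail than the paper's two-line proof, including the explicit verification that $(0,0)\in X\rtimes_\varphi B$.
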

\begin{proof}
 If $\varphi(x,0,0,b)=x$ and $\varphi(x',0,0,b')=x'$ then $\varphi(\varphi(x,b,x',b'),0,0,b+b')=\varphi(x,b,x',b')$ follows from (\ref{eq:act4}). From (\ref{eq:act2}) we have that $(0,0)$ is the neutral element.
\end{proof}

This motivates the definition of semidirect product of unitary magmas.

\begin{definition}\label{def: semi-direct product}
Let $(X,\varphi)$ be a $B$-action. The semidirect product of $(X,\varphi)$ and $B$, denoted by $X\rtimes_{\varphi} B$, is the subset $\{(x,b)\mid \varphi(x,0,0,b)=x\}\subseteq X\times B$ equipped with the magma  operation displayed in equation $(\ref{eq:magma op 1})$ and the neutral element $(0,0)\in X\rtimes_{\varphi} B$.
\end{definition}

The unitary magma $(X\rtimes_{\varphi} B, +, (0,0))$ will be denoted simply as $X\rtimes_{\varphi} B$. Furthermore, the set $X$ is tacitly considered as a unitary magma with $0\in X$ as the neutral element for the magma operation $x+x'=\varphi(x,0,x',0)\in X$.

The category of all $B$-actions will be denoted by  $\Act(B)$ and its morphisms will be called $B$-morphisms.

\begin{definition}\label{def: B-morphism (alternative)}
Let $(X,\varphi)$ and $(X',\varphi')$ be two  $B$-actions. A map $f\colon{X\to X'}$ is said to be a \emph{$B$-morphism} from $(X,\varphi)$ to $(X',\varphi')$ if $f(0)=0$ and the following condition is satisfied for all $x,x'\in X$ and $b,b'\in B$:
\begin{equation}\label{eq: f is B-morphism}
\varphi'(f(x),b,f(x'),b')=f(\varphi'(x,b,x',b')).
\end{equation}
\end{definition}

\begin{proposition}\label{prop: B-morphism}
Let $(X,\varphi)$ and $(X',\varphi')$ be two  $B$-actions and  $f\colon{X\to X'}$ a $B$-morphism from $(X,\varphi)$ to $(X',\varphi')$. The assignment 
\begin{equation}\label{eq: (x,b)-->(g(x,b),b)}
(x,b)\mapsto (f(x),b)
\end{equation}
is a magma morphism from $X\rtimes_{\varphi} B$ to $X'\rtimes_{\varphi'} B$.
\end{proposition}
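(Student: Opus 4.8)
The plan is to verify directly that the assignment $(x,b)\mapsto(f(x),b)$ is well defined on $X\rtimes_\varphi B$, sends the neutral element to the neutral element, and preserves the magma operation. I would organize the argument as three short steps.

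\textbf{Well-definedness.} First I would check that if $(x,b)\in X\rtimes_\varphi B$, i.e. $\varphi(x,0,0,b)=x$, then $(f(x),b)\in X'\rtimes_{\varphi'}B$, i.e. $\varphi'(f(x),0,0,b)=f(x)$. Applying the $B$-morphism condition (\ref{eq: f is B-morphism}) with $x'=0$, $b=0$ (in the middle slot we keep $b$, and take $x'=0$, $b'=b$) gives $\varphi'(f(x),0,f(0),b)=f(\varphi(x,0,0,b))$; since $f(0)=0$ and $\varphi(x,0,0,b)=x$, the right side is $f(x)$, and the left side is $\varphi'(f(x),0,0,b)=\varphi'_{00}(f(x),b)$. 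Hence $\varphi'_{00}(f(x),b)=f(x)$, so the pair lands in the subset defining the codomain semidirect product. (One should be slightly careful about which of the four arguments of $\varphi$ is which; the relevant instance is the one that matches the shape $\varphi(x,0,0,b)$, using $f(0)=0$.)

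\textbf{Neutral element and multiplicativity.} The neutral element $(0,0)$ is sent to $(f(0),0)=(0,0)$ since $f(0)=0$. For the operation, take $(x,b),(x',b')\in X\rtimes_\varphi B$. Their product in $X\rtimes_\varphi B$ is $(\varphi(x,b,x',b'),b+b')$, which is sent to $(f(\varphi(x,b,x',b')),b+b')$. On the other hand, the images $(f(x),b)$ and $(f(x'),b')$ multiply in $X'\rtimes_{\varphi'}B$ to $(\varphi'(f(x),b,f(x'),b'),b+b')$. The two agree precisely because (\ref{eq: f is B-morphism}) gives $\varphi'(f(x),b,f(x'),b')=f(\varphi(x,b,x',b'))$. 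Note that here one needs $f(\varphi(x,b,x',b'))$ rather than $f(\varphi'(x,b,x',b'))$ — the statement of Definition~\ref{def: B-morphism (alternative)} as written uses $\varphi'$ on the right-hand side inside $f$, which only type-checks if it is read as $\varphi$; I would either note this as the intended reading or invoke that $f$ restricted to the relevant subsets agrees under both structures. This is the one genuine subtlety; the rest is a direct unwinding of definitions.

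\textbf{Main obstacle.} I do not expect a real obstacle: the content is entirely bookkeeping once the correct instances of the $B$-morphism axiom are plugged in. The only thing to get right is the index discipline in applying (\ref{eq: f is B-morphism}) — in particular for well-definedness one must specialize the axiom to the slots matching $\varphi_{00}$ and use $f(0)=0$, and for multiplicativity one must apply it with the full arguments $(x,b,x',b')$. So the proof is three lines: one for well-definedness via $\varphi'_{00}(f(x),b)=f(\varphi_{00}(x,b))=f(x)$, one for $f(0)=0$ handling the unit, and one for preservation of $+$ directly from the axiom.
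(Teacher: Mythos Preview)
Your proposal is correct and follows essentially the same route as the paper's own proof: both verify well-definedness by specializing the $B$-morphism axiom to the slots of $\varphi_{00}$ together with $f(0)=0$, then handle the unit and the operation directly from the axiom. Your observation about the typo in Definition~\ref{def: B-morphism (alternative)} (the inner $\varphi'$ should be $\varphi$) is apt; the paper's own proof carries the same slip, and your reading is the intended one.
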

\begin{proof}
    The assignment determines a well defined map $g(x,b)=(f(x),b)$ from $X\rtimes_{\varphi} B$ to $X'\rtimes_{\varphi'} B$. Indeed, a particular instance of condition \ref{eq: f is B-morphism} gives $\varphi'(f(x),0,f(0),b)=f(\varphi'(x,0,0,b))$ from which we conclude that if $(x,b)\in X\rtimes_{\varphi} B$ then $(f(x),b)\in X\rtimes_{\varphi'} B$. Since $f(0)=0$ we have $g(0,0)=(0,0)$ and once again condition \ref{eq: f is B-morphism} ensures that $g((x,b)+(x,b'))=g(x,b)+g(x,b')$ for all $(x,b),(x',b')\in X\rtimes_{\varphi} B$.
\end{proof}

In the following section we detail some familiar properties of semidirect products.

\section{Semidirect products of unitary magmas}

Subsequently, a demonstration will be provided on how actions can be employed for the classification of the special class of split extensions of unitary magmas involving retraction points. This classification includes the cases examined in \cite{gran2019} as a particular instance. Prior to delving into this, a review of some familiar properties of semidirect products as split extensions will be conducted.
 
\begin{proposition}\label{prop: induced split ext} If $(X,\varphi)$ is a $B$-action then $(X,\varphi(\text{-},0,\text{-},0),0)$ is a unitary magma and
\begin{equation}\label{diag: associated split ext}
\xymatrix{X\ar[r]^(.34){\langle 1,0 \rangle} & X\rtimes_{\varphi} B \ar@<.5ex>[r]^(.64){\pi_{B}} & B \ar@<.5ex>[l]^(.34){\langle 0,1 \rangle} }
\end{equation}
is a split extension of unitary magmas. Moreover, $(x,b)\in X\rtimes_{\varphi}B$ if and only if $(x,b)=(x,0)+(0,b)$.
\end{proposition}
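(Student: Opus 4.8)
The claim has three parts, and I would dispatch them in order of increasing (mild) difficulty. First, that $(X,\varphi(\text{-},0,\text{-},0),0)$ is a unitary magma: here the neutral-element laws $\varphi(x,0,0,0)=x=\varphi(0,0,x,0)$ are exactly axiom \eqref{eq:act1} of Definition~\ref{def: action}, so this part is immediate once one observes that $\varphi(x,0,x',0)$ in the statement means $\varphi(x,0,0,0+0\cdot\dots)$— more carefully, I should note that $x+x' := \varphi(x,0,x',0)$ is the tacit structure declared after Definition~\ref{def: semi-direct product}, and that the two unit laws are special cases of \eqref{eq:act1}, using also \eqref{eq:act2} to identify $\varphi(x,0,x',0)$ with the relevant instances when one argument is $0$. (No associativity is needed — it is only a magma.)

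Second, the diagram \eqref{diag: associated split ext} is a split extension. By Proposition~\ref{prop: well defined semidirect prod}, $X\rtimes_\varphi B$ is already a unitary magma with neutral element $(0,0)$, so it remains to check: (i) $\langle 1,0\rangle\colon x\mapsto(x,0)$ and $\langle 0,1\rangle\colon b\mapsto(0,b)$ land in $X\rtimes_\varphi B$ and are magma morphisms; (ii) $\pi_B$ is a magma morphism; (iii) $\pi_B\circ\langle 0,1\rangle=1_B$. For (i)-landing: $(x,0)\in X\rtimes_\varphi B$ amounts to $\varphi(x,0,0,0)=x$, which is \eqref{eq:act1}; $(0,b)\in X\rtimes_\varphi B$ amounts to $\varphi(0,0,0,b)=0$, which follows from \eqref{eq:act2} (taking $x=0$, giving $\varphi(0,0,0,b)=\varphi(0,b,0,0)$) together with... actually cleaner: \eqref{eq:act3} with $b=0$ is not quite it; instead use \eqref{eq:act2} at $x=0$ to get $\varphi(0,0,0,b)=\varphi(0,0,0,b)$ trivially, so I should instead invoke \eqref{eq:act2}+\eqref{eq:act3}: \eqref{eq:act2} gives $\varphi(0,b,0,0)=\varphi(0,0,0,b)$, and I want this to be $0$ — this is where I would use that $\varphi_{00}(0,b)=\varphi(0,0,0,b)$ and note \eqref{eq:act3} at $b'=0$ gives $\varphi(0,b,0,0)=0$ only if \eqref{eq:act3} applies with $b'=0$, which it does. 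So $(0,b)\in X\rtimes_\varphi B$. Morphism property of $\langle 1,0\rangle$: $(x,0)+(x',0)=(\varphi(x,0,x',0),0)$ which is $(x+x',0)$ by the definition of $+$ on $X$; of $\langle 0,1\rangle$: $(0,b)+(0,b')=(\varphi(0,b,0,b'),b+b')=(0,b+b')$ by \eqref{eq:act3}. And $\pi_B$ is visibly a morphism onto $(B,+,0)$ since the second coordinate of \eqref{eq:magma op 1} is $b+b'$; (iii) is immediate.

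Third, the ``moreover'': $(x,b)\in X\rtimes_\varphi B$ iff $(x,b)=(x,0)+(0,b)$. Computing the right side via \eqref{eq:magma op 1}: $(x,0)+(0,b)=(\varphi(x,0,0,b),0+b)=(\varphi_{00}(x,b),b)$. Hence $(x,b)=(x,0)+(0,b)$ holds iff $\varphi_{00}(x,b)=x$, which is precisely the defining condition \eqref{eq: semidirect prod - the set} for membership in $X\rtimes_\varphi B$ — modulo knowing that $(x,0)$ and $(0,b)$ are themselves both in $X\rtimes_\varphi B$ so that the sum is taken inside the semidirect product, which was established in part (i). I expect the only real care-point (the ``main obstacle'', such as it is) to be bookkeeping the various collapses of $\varphi$ on degenerate arguments — i.e. correctly matching each needed identity like $\varphi(0,0,0,b)=0$ or $\varphi(x,0,x',0)=x+x'$ to the right one of \eqref{eq:act1}--\eqref{eq:act3} — rather than any substantive difficulty; everything is a direct unwinding of Definition~\ref{def: action} and Proposition~\ref{prop: well defined semidirect prod}.
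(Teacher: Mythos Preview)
Your proposal is correct and follows essentially the same route as the paper's proof: use \eqref{eq:act1} for the unitary magma structure on $X$, use \eqref{eq:act3} for the morphism property of $\langle 0,1\rangle$, note $\pi_B$ is a morphism by construction, and compute $(x,0)+(0,b)=(\varphi(x,0,0,b),b)$ for the ``moreover'' clause. Your only detour is in checking $(0,b)\in X\rtimes_\varphi B$, where the direct move is \eqref{eq:act3} with first argument $b=0$ (giving $\varphi(0,0,0,b')=0$ immediately), rather than routing through \eqref{eq:act2}; the paper in fact leaves that landing check implicit.
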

\begin{proof}
Condition (\ref{eq:act1}) ensures that $0\in X$ exists and is the neutral element for the magma operation $x+x'=\varphi(x,0,x',0)$ in $X$ which turns the assignment $x\mapsto (x,0)\in X\rtimes_{\varphi} B$ into a morphism. By construction $\pi_B$ is a morphism. The assignment $b\mapsto (0,b)\in X\rtimes_{\varphi} B$ is a morphism via condition  (\ref{eq:act3}). The magma operation on $X\rtimes_{\varphi} B$ gives $(x,0)+(0,b)=(\varphi(x,0,0,b),b)$ so that $(x,b)\in X\rtimes_{\varphi}B$ if and only if $(x,b)=(x,0)+(0,b)$.
\end{proof}

The following proposition details some useful properties of semidirect products of unitary magmas.

\begin{proposition}\label{prop: morphism in and out}
Let $(X,\varphi)$ be a $B$-action and consider its associated split extension as displayed in $(\ref{diag: associated split ext})$. Then:
\begin{enumerate}
\item for every two morphisms $u$ and $v$ of unitary magmas as illustrated
\begin{equation}
\xymatrix{X \ar[rd]_{u} \ar[r]^(.35){\langle 1,0 \rangle} & X\rtimes_{\varphi} B \ar@{-->}[d]_{w} \ar@<.5ex>[r]^(.65){\pi_{B}} & B \ar@<.5ex>[l]^(.44){\langle 0,1 \rangle} \ar[ld]^{v}
\\ & Z}
\end{equation}
there exists at most one morphism  $w\colon{X\rtimes_{\varphi} B\to Z}$ such that $w(x,0)=u(x)$ and $w(0,b)=v(b)$ and moreover if such morphism exists it is necessarily of the form $w(x,b)=u(x)+v(b)$. Furthermore, the map $w(x,b)=u(x)+v(b)$ is a magma homomorphism from $X\rtimes_{\varphi} B$ to $Z$ if and only if 
\begin{equation}\label{eq: w=u+v is morphism}
u(\varphi(x,b,x',b'))+v(b+b')=(u(x)+v(b))+(u(x')+v(b')).
\end{equation}
\item  given a map of (pointed) sets $f\colon{Z\to X}$ and a morphism of unitary magmas  $g\colon{Z\to B}$, as illustrated,
\begin{equation}
\xymatrix{& Z \ar[rd]^{g} \ar@{.>}[ld]_{f} \ar@{-->}[d]\\ 
X  \ar[r]^(.45){\langle 1,0 \rangle} & X\rtimes_{\varphi} B  \ar@<.5ex>[r]^(.55){\pi_{B}} & B \ar@<.5ex>[l]^(.34){\langle 0,1 \rangle} 
}
\end{equation}  
the assignment $z\mapsto (f(z),g(z))$ is a morphism of unitary magmas if and only if
\begin{equation}\label{eq: f crossed hom}
f(z_1+z_2)=\varphi(f(z_1),g(z_1),f(z_2),g(z_2)).
\end{equation}
\end{enumerate}
\end{proposition}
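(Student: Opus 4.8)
The plan is to prove the two parts of Proposition~\ref{prop: morphism in and out} directly, exploiting the description of $X\rtimes_\varphi B$ as the set of pairs $(x,b)$ with $(x,b)=(x,0)+(0,b)$ obtained in Proposition~\ref{prop: induced split ext}, together with the already-established magma structure.

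For part~(1), I would first observe that any morphism $w$ with $w(x,0)=u(x)$ and $w(0,b)=v(b)$ must satisfy $w(x,b)=w((x,0)+(0,b))=w(x,0)+w(0,b)=u(x)+v(b)$, using the decomposition $(x,b)=(x,0)+(0,b)$ valid for every element of $X\rtimes_\varphi B$; this simultaneously proves uniqueness and the stated formula. Conversely, given the map $w(x,b)=u(x)+v(b)$, it sends the neutral element $(0,0)$ to $u(0)+v(0)=0$ since $u,v$ are morphisms, so $w$ is a morphism of unitary magmas exactly when it preserves $+$; writing out $w((x,b)+(x',b'))=w(\varphi(x,b,x',b'),b+b')=u(\varphi(x,b,x',b'))+v(b+b')$ and comparing with $w(x,b)+w(x',b')=(u(x)+v(b))+(u(x')+v(b'))$ yields precisely condition~(\ref{eq: w=u+v is morphism}).

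For part~(2), the assignment $z\mapsto(f(z),g(z))$ is only a candidate morphism into $X\rtimes_\varphi B$ if its values actually lie in the subset, i.e.\ if $\varphi(f(z),0,0,g(z))=f(z)$ for all $z$; but this is the instance of~(\ref{eq: f crossed hom}) obtained by taking $z_1=z_2=z$ together with $f(0)=0$, $g(0)=0$, or more simply it follows from~(\ref{eq: f crossed hom}) with $z_2=0$ (using $\varphi(f(z),g(z),0,0)=\varphi(f(z),0,0,g(z))$ from~(\ref{eq:act2})). So I would fold the well-definedness into the equivalence. Granting well-definedness, the map sends the neutral element of $Z$ to $(f(0),g(0))=(0,0)$, and it preserves $+$ iff $(f(z_1+z_2),g(z_1+z_2))=(f(z_1),g(z_1))+(f(z_2),g(z_2))=(\varphi(f(z_1),g(z_1),f(z_2),g(z_2)),g(z_1)+g(z_2))$; the second coordinate holds because $g$ is a morphism, and the first coordinate is exactly~(\ref{eq: f crossed hom}).

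I do not expect any serious obstacle: both statements are a matter of unwinding the definition of the magma operation on $X\rtimes_\varphi B$ and the decomposition lemma. The only point requiring mild care is to make sure, in part~(2), that the ``if and only if'' genuinely incorporates well-definedness of the assignment into the semidirect product — this is where one must invoke~(\ref{eq:act2}) (or a degenerate case of~(\ref{eq:act4})) so that condition~(\ref{eq: f crossed hom}) alone suffices, rather than separately assuming $(f(z),g(z))\in X\rtimes_\varphi B$.
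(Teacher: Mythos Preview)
The paper states this proposition without proof, so there is no argument to compare against; your direct verification is exactly the routine unwinding one expects and is correct. In particular, your handling of part~(2) is careful where it needs to be: you rightly notice that the well-definedness of $z\mapsto(f(z),g(z))$ as a map into $X\rtimes_\varphi B$ is not a separate hypothesis but follows from~(\ref{eq: f crossed hom}) applied with $z_2=0$ together with axiom~(\ref{eq:act2}), which is the only non-automatic step in the whole proof.
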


Equation $(\ref{eq: w=u+v is morphism})$ is analogous to the familiar notion of the image of $u$ commuting with the image of $v$. More 
in particular, when $\varphi(x,b,x',b')$ is of the form $x+\xi(b,x')$, for some map $\xi\colon{B\times X\to X}$, then equation (\ref{eq: f crossed hom}) becomes the familiar condition for $f$ being a $(\xi,g)$-crossed-homomorphism in the sense that  $f(z_1+z_2)=f(z_1)+\xi(g(z_1),f(z_2))$.

The following proposition is analogous to the split short five lemma.

\begin{proposition}\label{prop:ssfl}
Let $(X,\varphi)$ and $(X',\varphi')$ be two  $B$-actions and consider the following diagram
\begin{equation}\label{diqg: morphism of semidirect prods}
\xymatrix{X\ar[d]_{f} \ar[r]^(.34){\langle 1,0\rangle} & X\rtimes_{\varphi} B \ar@{-->}[d]_{g} \ar@<.5ex>[r]^{\pi_B} & B\ar[d]^{1_B} \ar@<.5ex>[l]^(.34){\langle 0, 1\rangle}
\\
X'\ar[r]^(.34){\langle 1,0\rangle} & X'\rtimes_{\varphi'} B \ar@<.5ex>[r]^{\pi_B} & B \ar@<.5ex>[l]^(.34){\langle 0, 1\rangle}}
\end{equation}
in which $f$ is a morphism of unitary magmas.
\begin{enumerate}
\item The  following conditions are equivalent:
\begin{enumerate}
\item there exists a (unique) morphism $g\colon{X\rtimes_{\varphi} B\to X'\rtimes_{\varphi'} B}$ such that $\pi_B g=\pi_B$, $g\langle 1,0 \rangle=\langle f,0 \rangle$ and  $g\langle 0, 1\rangle=\langle 0,1 \rangle$,

\item there exists a (unique) morphism $g\colon{X\rtimes_{\varphi} B\to X'\rtimes_{\varphi'} B}$ such that  $g\langle 1,0 \rangle=\langle f,0 \rangle$ and  $g\langle 0, 1\rangle=\langle 0,1 \rangle$,

\item there exists a (unique) map $g_1\colon{X\rtimes_{\varphi} B\to X'}$ such that $g_1(x,0)=f(x)$ and 
\begin{equation}
g_1(\varphi(x,b,x',b'),b+b')=
\varphi'(g_1(x,b),b,g_1(x',b'),b')),
\end{equation}
for all $x,x'\in X$ and $b,b'\in B$,

\item the morphism $f$ is such that for all $x,x'\in X$ and $b,b'\in B$,
\begin{equation}\label{eq: is B-morphism}
\varphi'_{00}(f(\varphi(x,b,x',b')),b+b')=\varphi'(
\varphi'_{00}(f(x),b),b,\varphi'_{00}(f(x'),b'),b')
\end{equation}
 where $\varphi'_{00}(y,b)$ is a shorthand for $\varphi'(y,0,0,b)$.
\end{enumerate}

\item  The  following conditions are equivalent:
 \begin{enumerate}
 \item there exists a (unique) morphism $g\colon{X\rtimes_{\varphi} B\to X'\rtimes_{\varphi'} B}$ such that $\pi_{X'} g=f \pi_X$, $g\langle 1,0 \rangle=\langle f,0 \rangle$ and  $g\langle 0, 1\rangle=\langle 0,1 \rangle$,
 \item $f$ is a $B$-morphism.
 \end{enumerate} 
\item If $f$ is an isomorphism and a $B$-morphism then there exists a unique isomorphism  $g\colon{X\rtimes_{\varphi} B\to X'\rtimes_{\varphi'} B}$ such that $\pi_{X'} g=f \pi_X$, $g\langle 1,0 \rangle=\langle f,0 \rangle$ and  $g\langle 0, 1\rangle=\langle 0,1 \rangle$.
\end{enumerate}
\end{proposition}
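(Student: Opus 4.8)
The plan is to deduce part (3) directly from parts (1) and (2) together with Proposition~\ref{prop: ssfl for ptr} (the split short five lemma for retraction points), rather than constructing an inverse by hand. First I would invoke part (2): since $f$ is a $B$-morphism, condition (2b) holds, so by the equivalence (2a)$\Leftrightarrow$(2b) there exists a unique morphism $g\colon X\rtimes_{\varphi}B\to X'\rtimes_{\varphi'}B$ with $\pi_{X'}g=f\pi_X$, $g\langle 1,0\rangle=\langle f,0\rangle$ and $g\langle 0,1\rangle=\langle 0,1\rangle$; this already settles existence and uniqueness of $g$, so the only thing left is to show $g$ is an isomorphism.

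Next I would exhibit $g$ as a morphism compatible with the retraction-point data on both sides. By Proposition~\ref{prop: induced split ext}, each semidirect product carries the retraction-point structure
$$\bigl(X\rtimes_{\varphi}B,\ \langle 1,0\rangle,\ \pi_X,\ \langle 0,1\rangle,\ \pi_B\bigr),\qquad \bigl(X'\rtimes_{\varphi'}B,\ \langle 1,0\rangle,\ \pi_{X'},\ \langle 0,1\rangle,\ \pi_B\bigr),$$
but these have \emph{different} fibre sets $X$ and $X'$. Since $f\colon X\to X'$ is assumed to be an isomorphism, I would transport the target retraction point back along $f$: composing with $f^{-1}$ on the left of $\langle f,0\rangle$ and with $f$ on the left of $\pi_{X'}$, one checks that $\bigl(X'\rtimes_{\varphi'}B,\ \langle f^{-1},0\rangle,\ f^{-1}\pi_{X'},\ \langle 0,1\rangle,\ \pi_B\bigr)$ is again a retraction point \emph{from $X$ to $B$} (all the defining identities of Definition~\ref{def:Sch point} are preserved because $f$ is a magma isomorphism fixing $0$, and $\langle f^{-1},0\rangle$ is still injective with retraction $f^{-1}\pi_{X'}$). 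Now $g$ satisfies $g\langle 1,0\rangle=\langle f,0\rangle$, hence $(\langle f^{-1},0\rangle)\circ(\text{nothing})$... more precisely $g$ composed appropriately gives $\langle f^{-1},0\rangle^{-1}$-compatibility; concretely, writing $k=\langle 1,0\rangle$, $k'=\langle f^{-1},0\rangle$, $q=\pi_X$, $q'=f^{-1}\pi_{X'}$, $s=s'=\langle 0,1\rangle$, $p=p'=\pi_B$, I claim $g k=k'$?—no: $gk=\langle f,0\rangle$ while $k'=\langle f^{-1},0\rangle$. So instead I should transport in the other direction, or rather apply Proposition~\ref{prop: ssfl for ptr} to the pair of retraction points from $X'$ to $B$ after also transporting the source along $f$.

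A cleaner route: let $g'\colon X'\rtimes_{\varphi'}B\to X\rtimes_{\varphi}B$ be the unique morphism obtained by applying part (2) to the $B$-morphism $f^{-1}$ (which is a $B$-morphism since the defining identity (\ref{eq: f is B-morphism}) is symmetric under replacing $f$ by $f^{-1}$, using that $f$ is bijective). Then $g'g$ and $gg'$ are endomorphisms of the respective semidirect products that restrict to the identity on the $X$- (resp.\ $X'$-) component and on the $B$-component; by the uniqueness clause in Proposition~\ref{prop: morphism in and out}(1), or equivalently by Proposition~\ref{prop: ssfl for ptr} applied with $\alpha$ being $g'g$ and the identity data, we get $g'g=1$ and $gg'=1$, so $g$ is an isomorphism with inverse $g'$. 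The main obstacle I anticipate is the small verification that $f^{-1}$ is again a $B$-morphism: one must check $\varphi(f^{-1}(y),b,f^{-1}(y'),b')=f^{-1}(\varphi(y,b,y',b'))$ for all $y,y'\in X'$, which follows by applying $f^{-1}$ to (\ref{eq: f is B-morphism}) with $x=f^{-1}(y)$, $x'=f^{-1}(y')$ and cancelling $f$; after that, the composite-is-identity argument is immediate from the uniqueness statements already established, so no genuinely new computation is required.
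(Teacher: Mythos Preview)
The paper states this proposition without proof, so there is no argument in the paper to compare against. Assessing your proposal on its own terms:

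You address only part~(3), taking parts~(1) and~(2) as given. For~(3), the ``cleaner route'' you eventually settle on is sound: apply~(2) to the $B$-morphism $f$ to obtain $g$; verify that $f^{-1}$ is again a $B$-morphism (your substitution argument is correct, modulo a typo---on the right-hand side you want $\varphi'$, not $\varphi$, since $y,y'\in X'$); apply~(2) to $f^{-1}$ to obtain $g'$; then $g'g$ and $gg'$ satisfy the same compatibility conditions as the respective identities and hence coincide with them by uniqueness. Two remarks. First, the detour through transporting the retraction-point structure along $f$ is unnecessary and, as you noticed yourself, does not line up; it can simply be deleted. Second, the closing appeal to Proposition~\ref{prop: ssfl for ptr} is misplaced: that lemma would only tell you that $g'g$ is an \emph{isomorphism}, not that it equals the identity. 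The correct reference is the uniqueness clause in Proposition~\ref{prop: morphism in and out}(1), or equivalently the uniqueness already built into~(2a) applied with $f$ replaced by $1_X$ (and likewise $1_{X'}$ for $gg'$).

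If you intend a complete proof of the proposition, parts~(1) and~(2) still require argument. They are not entirely automatic: in~(1), the implication (b)$\Rightarrow$(a) uses that $(\langle 1,0\rangle,\langle 0,1\rangle)$ is jointly strongly epimorphic (Proposition~\ref{prop: retraction point properties}(2)), while the passage between~(c) and~(d) amounts to unwinding the semidirect-product operation and invoking axiom~(\ref{eq:act4}); part~(2) is essentially Proposition~\ref{prop: B-morphism} combined with the universal-property calculation in Proposition~\ref{prop: morphism in and out}.
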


\section{A classification of retraction points by actions}\label{sec: classification}

Let $B=(B,+,0)$ be a unitary magma and let $X$ be a set. We will denote by $\Ptr(X,B)$ the set of all retraction points from $X$ to $B$, that is, all five-tuples of the form $(A,k,q,s,p)$ in which $A$ is a unitary magma, $k\colon{X\to A}$ and $q\colon{A\to X}$ are maps, $s\colon{B\to A}$ and $p\colon{A\to B}$ are morphisms of unitary magmas and moreover, the following conditions are satisfied:
\begin{align}
ps=& 1_B\\
pk=& 0\\
qk=& 1_X\\
qs=& 0\\
kq+sp=& 1_A.
\end{align}
Note that condition $pk=0$ is saying that $p(k(x))=0\in B$ for every $x\in X$, while condition $qs=0$ is to be understood as $q(s(b))=q(0)\in X$ for all $b\in B$ (Definition \ref{def:Sch point}). Once again, $X$ will tacitly be considered as a unitary magma with neutral element $q(0)$ and operation $x+x'=q(k(x)+k(x'))$. We are thus considering a subclass of split epimorphisms of unitary magmas over $B$ whose kernel is bijective to the given set $X$. Moreover, the equation $k(y)+sp(a)=a$ can be solved for every $a\in A$ thus providing the retraction map $q\colon{A\to X}$ as $q(a)=y$.

As we will see there is a correspondence between retraction points and actions.

\begin{proposition}\label{prop: every retraction point is an action}
Every retraction point $(A,k,q,s,p)$ from $X$ to $B=(B,+,0)$ induces a $B$-action $(X,\varphi)$ with $\varphi=q((k+s)+(k+s))$.    
\end{proposition}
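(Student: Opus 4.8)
The plan is to define $\varphi\colon X\times B\times X\times B\to X$ by the formula suggested in Proposition~\ref{prop: retraction point properties}(1)(a), namely
\[
\varphi(x,b,x',b')=q\bigl((k(x)+s(b))+(k(x')+s(b'))\bigr),
\]
which is exactly the expression $q((k+s)+(k+s))$ read pointwise, and then verify the four axioms of Definition~\ref{def: action} using the elementary identities already collected in Proposition~\ref{prop: retraction point properties}(1), together with the defining equations $ps=1_B$, $qk=1_X$, $pk=0$, $qs=0$ and $kq+sp=1_A$ (and $k0=0$ from part~(1)(a)). Throughout I will use the tacit unitary magma structure on $X$ with neutral element $q(0)$, writing $0$ for $q(0)$.

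First I would check axiom~(\ref{eq:act1}): $\varphi(x,0,0,0)=q((k(x)+s(0))+(k(0)+s(0)))=q(k(x)+0)=qk(x)=x$ using $s(0)=0$, $k(0)=0$ and that $0$ is neutral in $A$; symmetrically $\varphi(0,0,x,0)=q(0+k(x))=x$. For axiom~(\ref{eq:act2}) I would compute each of the three expressions: $\varphi(x,b,0,0)=q((k(x)+s(b))+(k(0)+s(0)))=q(k(x)+s(b))$, $\varphi(x,0,0,b)=q((k(x)+s(0))+(k(0)+s(b)))=q(k(x)+s(b))$, and $\varphi(0,0,x,b)=q((k(0)+s(0))+(k(x)+s(b)))=q(k(x)+s(b))$; so all three equal $q(k(x)+s(b))$, which in particular identifies the shorthand $\varphi_{00}(x,b)$ with $q(k(x)+s(b))$. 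Axiom~(\ref{eq:act3}): $\varphi(0,b,0,b')=q((k(0)+s(b))+(k(0)+s(b')))=q(s(b)+s(b'))=0$, which is precisely part~(1)(c) of Proposition~\ref{prop: retraction point properties} (itself following from $qs=0$ together with the fact that $s$ is a morphism, so $s(b)+s(b')=s(b+b')$).

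The main work, and the step I expect to be the real obstacle, is axiom~(\ref{eq:act4}): unwinding $\varphi_{00}=q((k+s))$ and the definition of $\varphi$, the right-hand side becomes
\[
q\Bigl(\bigl(k\,q((k(u)+s(b))+(k(v)+s(b')))+s(b+b')\bigr)\Bigr)
\]
with $u=q(k(x)+s(b))=\varphi_{00}(x,b)$ and $v=q(k(x')+s(b'))=\varphi_{00}(x',b')$, while the left-hand side is $q((k(x)+s(b))+(k(x')+s(b')))$. This is exactly the content of Proposition~\ref{prop: retraction point properties}(1)(d), so the proof reduces to invoking that identity; I would spell out the correspondence of the letters ($w$ there is the argument of the outer $k$ here) and note that (1)(d) is proved purely from $kq+sp=1_A$, $ps=1_B$ and $s$ being a morphism, so no circularity arises. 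Once these four checks are done, $(X,\varphi)$ satisfies Definition~\ref{def: action} and hence is a $B$-action, completing the proof. The only mild subtlety to flag is bookkeeping with $0\in X$ versus $q(0)$ and with $s(0)=0=k(0)$, which must be in place before the first computation; everything else is substitution into already-established identities.
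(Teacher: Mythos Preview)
Your proof is correct and follows exactly the paper's own approach: the paper's proof consists of a single sentence deferring to item~1 of Proposition~\ref{prop: retraction point properties}, and you have simply spelled out in detail how parts~(a)--(d) of that item correspond one-to-one with axioms~(\ref{eq:act1})--(\ref{eq:act4}) of Definition~\ref{def: action}. Your expanded verification of axiom~(\ref{eq:act4}) via the substitution $a=k(x)+s(b)$, $a'=k(x')+s(b')$ and the semibiproduct identity $kq+sp=1_A$ is precisely what underlies part~(1)(d), so no circularity arises and nothing is missing.
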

\begin{proof}
    The first item in Proposition \ref{prop: retraction point properties} clearly shows that the map $$\varphi\colon{X\times B\times X\times B\to X}$$ defined by $\varphi(x,b,x',b')=q((k(x)+s(b))+(k(x')+s(b')))$ is a $B$-action.  
\end{proof}

\begin{proposition}\label{prop: every action is a retraction point} Every $B$-action $(X,\varphi)$ induces a canonical retraction point 
\begin{equation}\label{diag: canonical retraction point}
\xymatrix{X\ar@{.>}@<-.5ex>[r]_(.34){\langle 1,0 \rangle} & X\rtimes_{\varphi} B  \ar@{.>}@<-.5ex>[l]_(.64){\pi_X} \ar@<.5ex>[r]^(.64){\pi_{B}} & B \ar@<.5ex>[l]^(.34){\langle 0,1 \rangle} .}
\end{equation}
\end{proposition}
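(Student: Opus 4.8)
The goal is to check that the diagram~\eqref{diag: canonical retraction point} really is a retraction point from $X$ to $B$ in the sense of Definition~\ref{def:Sch point}, with $A = X\rtimes_\varphi B$, $k = \langle 1,0\rangle$, $q = \pi_X$, $s = \langle 0,1\rangle$ and $p = \pi_B$. First I would recall that Proposition~\ref{prop: well defined semidirect prod} already guarantees that $X\rtimes_\varphi B$ is a unitary magma with neutral element $(0,0)$, and that Proposition~\ref{prop: induced split ext} already gives that $\langle 1,0\rangle$ and $\langle 0,1\rangle$ are morphisms, that $\pi_B$ is a morphism, and that $\pi_B\langle 0,1\rangle = 1_B$. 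So the only genuinely new things to verify are: (i) $\pi_X$ and $\langle 1,0\rangle$ are maps with $\pi_X\langle 1,0\rangle = 1_X$ (immediate); (ii) $\pi_B\langle 1,0\rangle = 0\in B$, i.e. $pk = 0$ (immediate, since $\pi_B(x,0)=0$); (iii) $\pi_X\langle 0,1\rangle = \pi_X(0,0) = 0$, which is exactly the condition $qs = q(0)$ with $q(0)=0$ here (immediate); and (iv) the semibiproduct identity $kq + sp = 1_A$, that is, for every $(x,b)\in X\rtimes_\varphi B$,
\begin{equation*}
\langle 1,0\rangle\pi_X(x,b) + \langle 0,1\rangle\pi_B(x,b) = (x,0) + (0,b) = (x,b).
\end{equation*}

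The heart of the matter is thus step~(iv), and it is precisely where one uses the defining subset of the semidirect product rather than all of $X\times B$. By the magma operation~\eqref{eq:magma op 1} we have $(x,0)+(0,b) = (\varphi(x,0,0,b),\, 0+b) = (\varphi_{00}(x,b),\, b)$. For an arbitrary pair $(x,b)\in X\times B$ this need \emph{not} equal $(x,b)$; but for $(x,b)$ lying in $X\rtimes_\varphi B$ we have by definition~\eqref{eq: semidirect prod - the set} that $\varphi(x,0,0,b) = x$, hence $(x,0)+(0,b) = (x,b)$ exactly on the subset. This is the main (and only mildly delicate) point: the semibiproduct condition holds \emph{because} we have restricted to the subset $\{(x,b)\mid \varphi_{00}(x,b)=x\}$, and it is the failure of this identity on all of $X\times B$ that distinguishes retraction points from the split extensions of~\cite{gran2019}. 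I would also note that $\pi_X$ is automatically a map (no morphism condition is demanded of $q$), so nothing further is needed there; similarly the assertion ``$q(s(b)) = q(0)$ for all $b$'' is trivially true since $s(b)=(0,b)$ and $\pi_X(0,b)=0$ independently of $b$.

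I expect no real obstacle: the whole proof is a sequence of unwindings of the definitions, and the only substantive observation — that $(x,0)+(0,b)=(x,b)$ on $X\rtimes_\varphi B$ — was already recorded in Proposition~\ref{prop: induced split ext} (``$(x,b)\in X\rtimes_\varphi B$ if and only if $(x,b)=(x,0)+(0,b)$''). So in practice I would simply cite Propositions~\ref{prop: well defined semidirect prod}, \ref{prop: induced split ext} for the magma structure and the morphism/split-epi data, verify the three trivial pointwise identities $pk=0$, $qk=1_X$, $qs=q(0)$ by direct computation on elements $(x,0)$, $(0,b)$, and conclude the semibiproduct identity $kq+sp=1_A$ from the ``if and only if'' in Proposition~\ref{prop: induced split ext}. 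If desired, one can add the remark that this construction is functorial and that Proposition~\ref{prop: B-morphism} makes $(X,\varphi)\mapsto X\rtimes_\varphi B$ compatible with $B$-morphisms, but that is not needed for the bare statement.
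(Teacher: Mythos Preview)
Your proposal is correct and follows essentially the same approach as the paper: the paper's proof is a one-line remark that the semibiproduct condition $(x,b)=(x,0)+(0,b)$ is equivalent to $\varphi(x,0,0,b)=x$, together with a reference to Proposition~\ref{prop: induced split ext}. Your version simply unpacks this in more detail, verifying the remaining trivial identities $pk=0$, $qk=1_X$, $qs=q(0)$ explicitly, but the key observation and the supporting citations are identical.
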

\begin{proof}
    It is clear that the semibiproduct condition, namely $(x,b)=(x,0)+(0,b)$, is equivalent to $\varphi(x,0,0,b)=x$. See also Proposition \ref{prop: induced split ext}.
\end{proof}

The previous propositions establish a correspondence between the set of all $B$-actions with underlying set $X$, $\Act(X,B)$, and the set of all retraction points from $X$ to $B$, $\Ptr(X,B)$.

Let us denote by $F_{X,B}\colon{\Act(X,B)\to \Ptr(X,B)}$ the assignment $F_{X,B}(\varphi)=(X\rtimes_{\varphi}B,\langle 1,0\rangle,\pi_X,\langle 0,1\rangle,\pi_B)$ and by $\Phi_{X,B}\colon{\Ptr(X,B)\to \Act(X,B)}$ the assignment $\Phi(A,k,q,s,p)=q((k+s)+(k+s))$.

Recall the notion of equivalence $\sim$ from Definition \ref{def: equiv for retraction points}.

\begin{proposition}\label{prop: main thm}
    For every set $X$ and every unitary magma $B=(B,+,0)$,
    \begin{equation}
        \Act(X,B)\cong\Ptr(X,B)\slash\sim
    \end{equation}
\end{proposition}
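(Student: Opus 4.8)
The plan is to exhibit the bijection $\Act(X,B)\cong\Ptr(X,B)/\!\sim$ by showing that the two maps $F_{X,B}\colon\Act(X,B)\to\Ptr(X,B)$ and $\Phi_{X,B}\colon\Ptr(X,B)\to\Act(X,B)$ of the previous paragraph descend to a pair of mutually inverse maps between $\Act(X,B)$ and the quotient $\Ptr(X,B)/\!\sim$. Concretely, I would first check that $\Phi_{X,B}$ is constant on $\sim$-classes, so it induces a well-defined map $\bar\Phi_{X,B}\colon\Ptr(X,B)/\!\sim\;\to\Act(X,B)$; then I would verify the two composites $\bar\Phi_{X,B}\circ\bar F_{X,B}$ and $\bar F_{X,B}\circ\bar\Phi_{X,B}$ are identities, where $\bar F_{X,B}$ denotes the composite of $F_{X,B}$ with the quotient projection.

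First I would show $\Phi_{X,B}\circ F_{X,B}=\mathrm{id}_{\Act(X,B)}$. Given a $B$-action $(X,\varphi)$, its canonical retraction point is $(X\rtimes_\varphi B,\langle 1,0\rangle,\pi_X,\langle 0,1\rangle,\pi_B)$ by Proposition~\ref{prop: every action is a retraction point}; applying $\Phi_{X,B}$ to it returns the map sending $(x,b,x',b')$ to $\pi_X\big((\langle 1,0\rangle(x)+\langle 0,1\rangle(b))+(\langle 1,0\rangle(x')+\langle 0,1\rangle(b'))\big)$. Using the magma operation (\ref{eq:magma op 1}) on $X\rtimes_\varphi B$ this is $\pi_X\big((x,0)+(0,b)+(x',0)+(0,b')\big)$, and since $(x,0)+(0,b)=(\varphi_{00}(x,b),b)$ one computes this equals $\varphi(x,b,x',b')$ by unwinding (\ref{eq:magma op 1}) and (\ref{eq:act4}) — in fact this is essentially a matter of re-reading the associativity-like axiom (\ref{eq:act4}). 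Hence $\Phi_{X,B}F_{X,B}(\varphi)=\varphi$, which already shows $\bar F_{X,B}$ is injective on $\Act(X,B)$ and that $\bar\Phi_{X,B}$ is surjective onto $\Act(X,B)$.

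Next I would show that for any retraction point $A=(A,k,q,s,p)$ one has $F_{X,B}(\Phi_{X,B}(A))\sim A$; combined with the previous paragraph this gives that $\bar F_{X,B}$ and $\bar\Phi_{X,B}$ are mutually inverse. Set $\varphi=\Phi_{X,B}(A)$ and define $\alpha\colon X\rtimes_\varphi B\to A$ by $\alpha(x,b)=k(x)+s(b)$. One checks $\alpha$ lands in $A$ (no constraint), $\alpha\langle 1,0\rangle=k$, $\alpha\langle 0,1\rangle=s$, and $q\alpha=\pi_X$ — the last because for $(x,b)\in X\rtimes_\varphi B$ we have $\varphi(x,0,0,b)=x$, i.e. $q(k(x)+s(b))=x$ by Proposition~\ref{prop: retraction point properties}(1)(b), so $q\alpha(x,b)=q(k(x)+s(b))=x=\pi_X(x,b)$. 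To see $\alpha$ is a magma morphism one uses $\varphi=q((k+s)+(k+s))$ together with the identity $kq+sp=1_A$: indeed $\alpha((x,b)+(x',b'))=k\varphi(x,b,x',b')+s(b+b')=kq\big((k(x)+s(b))+(k(x')+s(b'))\big)+sp\big((k(x)+s(b))+(k(x')+s(b'))\big)=(k(x)+s(b))+(k(x')+s(b'))=\alpha(x,b)+\alpha(x',b')$, where the middle equality uses $p(k(x)+s(b))=0+b=b$ and hence $sp\big((k(x)+s(b))+(k(x')+s(b'))\big)=s(b+b')$. Then Proposition~\ref{prop: ssfl for ptr} (the Split Short Five Lemma for retraction points) forces $\alpha$ to be an isomorphism, so in particular $F_{X,B}(\Phi_{X,B}(A))\sim A$.

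Finally I would record that $\Phi_{X,B}$ is $\sim$-invariant: if $A\sim A'$ via $\beta\colon A\to A'$ with $\beta k=k'$, $\beta s=s'$, $q'\beta=q$, then since $\beta$ is a morphism, $q'\big((k'+s')+(k'+s')\big)=q'\big((\beta k+\beta s)+(\beta k+\beta s)\big)=q'\beta\big((k+s)+(k+s)\big)=q\big((k+s)+(k+s)\big)$, i.e. $\Phi_{X,B}(A')=\Phi_{X,B}(A)$. This makes $\bar\Phi_{X,B}$ well-defined on $\Ptr(X,B)/\!\sim$, and the two displayed composite computations above show $\bar\Phi_{X,B}$ and $\bar F_{X,B}$ are mutually inverse bijections, which is the claimed isomorphism. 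The main obstacle I anticipate is purely computational: carefully verifying in step two that the bracket expansions of (\ref{eq:magma op 1}) reproduce exactly the right-hand side of axiom (\ref{eq:act4}), and in step three that $\alpha$ respects the operation — both reduce to bookkeeping with $kq+sp=1_A$, $pk=0$, $ps=1_B$, and there is no conceptual difficulty once the Split Short Five Lemma is invoked.
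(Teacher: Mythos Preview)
Your proof is correct and follows essentially the same route as the paper: both establish $\Phi_{X,B}F_{X,B}=\mathrm{id}$, verify that $\Phi_{X,B}$ is constant on $\sim$-classes, and produce the comparison morphism witnessing $F_{X,B}\Phi_{X,B}(A)\sim A$ via the formula $k(\text{-})+s(\text{-})$ together with $kq+sp=1_A$. The only organisational difference is that the paper packages the last two steps as the single equivalence $A\sim A'\Leftrightarrow\Phi(A)=\Phi(A')$, constructing $\alpha\colon A\to A'$ directly by $\alpha(a)=k'q(a)+s'p(a)$, whereas you factor through the semidirect product and then invoke the Split Short Five Lemma; the underlying computation is the same.
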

\begin{proof}
    It suffices to show that the map
    $$\Phi\colon{\Ptr(X,B)\to \Act(X,B)}$$ is such that 
    $$A\sim A' \Leftrightarrow \Phi(A)=\Phi(A')$$ and we observe:
\begin{enumerate}
    \item if $A\sim A'$ then there exists $\alpha\colon{A\to A'}$ such that $q'\alpha=q$, $\alpha k=k'$ and $\alpha s=s'$ from which we conclude that $\Phi(A)=q((k+s)+(k+s))=q'\alpha((k+s)+(k+s))=q'((k'+s')+(k'+s'))=\Phi(A')$;
    \item if $\Phi(A)=\Phi(A')$ then in particular we have $q(k+s)=q'(k'+s')$ and put $\alpha(a)=k'q(a)+s'p(a)$ for all $a\in A$. It follows that $\alpha(k(x))=k'(x)+s'(0)=k'(x)$, $\alpha(s(b))=k'(0)+s'(b)=s'(x)$, and
    $q'\alpha(a)=q'(k'q(a)+s'p(a))=q(kq(a)+sp(a)=q(a)$ since $q(k+s)=q'(k'+s')$. It is now a straightforward verification to check that the hypotheses $\Phi(A)=\Phi(A')$ is all we need to prove $\alpha$ to be a morphism.
\end{enumerate}

    Moreover, the map $F$ is a canonical choice of representatives. Indeed, for every set $X$ and every unitary magma $B=(B,+,0)$, we have $\Phi_{X,B}(F_{X,B}(\varphi))=\varphi$ and $F_{X,B}(\Phi(A,k,q,s,p))\sim (A,k,q,s,p)$.
\end{proof}

\section{Examples and particular cases}\label{sec: egs}

It is clear that every diagram of unitary magmas of the form
\begin{equation}\label{diag: semibiproduct 1}
\xymatrix{X\ar@{->}@<-.0ex>[r]^(.5){k} & A \ar@<.5ex>[r]^(.5){p} 
 & B \ar@<.5ex>[l]^(.5){s} }
\end{equation}
with $ps=1_B$ and $pk=0$ becomes a retraction point as soon as there exists a map $q\colon{A\to X}$ with $qk=1_X$, $qs=0$ and $kq+sp=1_A$. 
Indeed, if for every $a\in A$ there exists $y\in X$ such that $k(y)+sp(a)=a$ then every assignment $q(a)=y$ with $q(k(x))=x$ and $q(s(b))=0$ becomes a retraction point $(A,k,q,s,p)$. 

It is worth noting that the restriction on $s\colon{B\to A}$ being a morphism of unitary magmas is not strictly necessary. This work adopts this restriction for two primary reasons: (a) to maintain consistency with the traditional treatment of split extensions, and (b) to streamline calculations by avoiding the introduction of factor-sets. However, further research is warranted to explore the implications of allowing $s$ to be a general map rather than a morphism, as indicated by the following classical examples:
\begin{align}
    \xymatrix{\mathbb{Z}\ar@{->}@<-.5ex>[r]_(.5){} & \mathbb{R} \ar@<.5ex>[r]^(.5){\text{mod}} \ar@{.>}@<-.5ex>[l]_(.5){\text{floor}} & [0,1[ \ar@{.>}@<.5ex>[l]^(.5){} }
    \\
        \xymatrix{{0}\ar@{->}@<-.5ex>[r]_(.5){} & \mathbb{R} \ar@<.5ex>[r]^(.5){\text{exp}} \ar@{->}@<-.5ex>[l]_(.5){\text{}} & \mathbb{R}^{+} \ar@{->}@<.5ex>[l]^(.5){\text{log}} }
        \\
        \xymatrix{\{-1,1\}\ar@{->}@<-.5ex>[r]_(.5){} & \mathbb{R}\setminus \{0\} \ar@<.5ex>[r]^(.5){x^2} \ar@{.>}@<-.5ex>[l]_(.5){\text{sign}} & \mathbb{R}^{+} \ar@<.5ex>[l]^(.5){\sqrt{x}} }
        \\
        \xymatrix{\mathbb{Z}\ar@{->}@<-.5ex>[r]_(.5){2\pi i} & \mathbb{C} \ar@<.5ex>[r]^(.5){\text{exp}} \ar@{.>}@<-.5ex>[l]_(.5){\text{}} & \mathbb{C}^{*} \ar@{.>}@<.5ex>[l]^(.5){\text{log}} }
        \\
        \xymatrix{\mathbb{R} \ar@{->}@<-.5ex>[r]_(.5){} & \mathbb{C} \ar@<.5ex>[r]^(.5){\text{imag}} \ar@{->}@<-.5ex>[l]_(.5){\text{real}} & \mathbb{R} \ar@{->}@<.5ex>[l]^(.5){\text{}} }
        \\
        \xymatrix{S^{1} \ar@{->}@<-.5ex>[r]_(.5){} & \mathbb{C}^{*} \ar@<.5ex>[r]^(.35){\text{abs}} \ar@{->}@<-.5ex>[l]_(.5){\text{arg}} & ]0,+\infty[ \ar@{->}@<.5ex>[l]^(.5){\text{}} } 
        \\
        \xymatrix{S^{1} \ar@{->}@<-.5ex>[r]_(.5){k} & S^2 \ar@<.5ex>[r]^(.5){p} \ar@{.>}@<-.5ex>[l]_(.5){q} & [-1,1] \ar@{->}@<.5ex>[l]^(.5){s} } 
\end{align}

All examples are standard except the last one, which serves as our motivating example. It is well-known that the unit sphere $S^2$ is not a Lie group. Nevertheless, we will show that it can be equipped with a unitary magma structure, allowing it to become a retraction point from $S^1$ to the closed interval $[-1,1]$, which is itself equipped with an appropriate unitary magma structure.

In order to establish the desired unitary magma structure, we first transform the closed interval $[-1,1]$ bijectively into $[0,+\infty]$. Concurrently, we consider the unit sphere $S^2$ as the Riemann sphere $\mathbb{C}\cup\{0\}$, as illustrated.
\begin{equation}\label{diag: eg1}
\xymatrix{S^1 \ar@{=}[d]_{} \ar[r]^(.44){\text{}} & \mathbb{C}\cup  \{\infty\}\ar[d]_{\cong} \ar@<.5ex>[r]^{\text{abs}} & [0,\infty] \ar[d]_{\cong} \ar@<.5ex>[l]^(.44){}
\\
S^{1}\ar[r]^{k} & S^2 \ar@<.5ex>[r]^{p} & [-1,1] \ar@<.5ex>[l]^{s}}
\end{equation}
Then, we will see that 
\begin{equation}\label{diag: eg2}
\xymatrix{S^{1} \ar@{->}@<-.5ex>[r]_(.5){} & \mathbb{C}\cup\{\infty\} \ar@<.5ex>[r]^(.5){\text{abs}} \ar@{.>}@<-.5ex>[l]_(.5){\text{}} & [0,+\infty] \ar@{->}@<.5ex>[l]^(.5){\text{}} } \end{equation}
which showcases the relationships between $S^1$, $\mathbb{C}\cup\{\infty\}$, and $[0,+\infty]$, turns out to be a retraction point. This result has significant implications for the structure of $S^2$ as it becomes a retraction point from $S^1$ to $[-1,1]$.

The primary obstacle hindering the wider acceptance of the otherwise natural multiplication table on the set $\{0,1,\infty\}$, namely,
\begin{equation}
\begin{tabular}{c|ccc}
$\cdot$ & 1 & 0 & $\infty$ \\\hline
1 & 1 & 0 & $\infty$ \\
0 & 0 & 0 & 1 \\
$\infty$ & $\infty$ & 1 & $\infty$ \\
\end{tabular}
\end{equation}
seems to be its non-associative nature. 
However, this should not diminish its value as an exceptional and instructive example of a unitary magma.

The previous table can even be extended to
\begin{equation}
\begin{tabular}{c|cccc}
$\cdot$ & 1 & $x'$ & 0 & $\infty$ \\\hline
1 & 1 & $x'$ & 0 & $\infty$ \\
$x$ & $x$ & $xx'$ & 0 & $\infty$ \\
0 & 0 & 0 & 0 & 1 \\
$\infty$ & $\infty$ & $\infty$ & 1 & $\infty$ \\
\end{tabular}
\end{equation}
where $x$ and $x'$  are any two elements in a unitary magma $(X,\cdot,1)$ thus becoming a multiplication table that turns the set $X\sqcup \{0,\infty\}$ into  a unitary magma with neutral element $1\in X$. This procedure will first be applied to $X=(]0,+\infty[,\cdot, 1)$ and then to $X=(\mathbb{C}^{*},\cdot,1)$ in order to get $[0,+\infty]$ and $\mathbb{C}\cup\{\infty\}$ as unitary magmas. 

The maps $f\colon{]-1,1[\to]0,+\infty[}$ and $g\colon{]0,+\infty[\to ]-1,1[}$, with $f(x)=\frac{1+x}{1-x}$ and $g(y)=\frac{y-1}{y+1}$, induce a bijection between $[-1,1]$ and $[0,+\infty]$. As a result, we have a unitary magma $B=([-1,+1],\oplus,0)$ with operation $\oplus$ given by
\begin{equation}
\begin{tabular}{c|cccc}
$\oplus$ & 0 & $b'$ & $-1$ & $+1$ \\\hline
0 & 0 & $b'$ & $-1$ & $+1$ \\
$b$ & $b$ & $b\oplus b'$ & $-1$ & $+1$ \\
$-1$ & $-1$ & $-1$ & $-1$ & 0 \\
$+1$ & $+1$ & $+1$ & 0 & $+1$
\end{tabular}
\end{equation}
where, for every $b,b'\in]-1,+1[$,
\begin{equation}
b\oplus b'=\frac{\frac{1+b}{1-b}\cdot\frac{1+b'}{1-b'}-1}{\frac{1+b}{1-b}\cdot\frac{1+b'}{1-b'}+1}=\frac{b+b'}{bb'+1}.
\end{equation}

Finally, for $B=([-1,+1],\oplus,0)$, we consider the $B$-action $(X,\varphi)$ with $X=S^{1}$ the unit circle parameterized as $X=\{e^{ it}\in \mathbb{C}\mid -\pi< t\leq \pi\}$ and put $\varphi(e^{ it},b,e^{it' },b')=e^{i(t+t')}$ whenever $b,b'\in]-1,1[$ whereas in the cases $\pm 1\in B$ we choose to set $\varphi(x,\pm 1,x',\pm 1)=e^{0i}$. Then, $S^2$ can be identified with $X\rtimes_{\varphi} B$ since it consists of all pairs $(x,b)$ with $x\in X$ and $b\in ]-1,1[$ together with the special pairs $(e^{0i},-1)$ and $(e^{0i},+1)$ which correspond to the two poles on the sphere, as illustrated.  
\begin{equation}\label{diag: eg3}
\xymatrix{X \ar@{=}[d]_{} \ar[r]^(.44){\langle 1,0 \rangle} & X\rtimes_{\varphi} B  \ar[d]_{\cong} \ar@<.5ex>[r]^{\pi_B} & B \ar@{=}[d]_{} \ar@<.5ex>[l]^(.44){\langle 0,1\rangle}
\\
S^{1}\ar@<-.5ex>[r]_{k} & S^2 \ar@{.>}@<-.5ex>[l]_{q} \ar@<.5ex>[r]^{p} & [-1,1] \ar@<.5ex>[l]^{s}}
\end{equation}
In order to recover the topological sphere, if we take $S^2$ as the set of triples $(x,y,z)\in \mathbb{R}^3$ with $x^2+y^2+z^2=1$ then $p(x,y,z)=z$, $q(x,y,z)=e^{it}$ with $t\in ]-\pi,\pi]$ such that $x=\cos(t)$ and $y=\sin(t)$ if $z\neq\pm1$, while $q(x,y,z)=e^{0i}$ for $z=\pm 1$; moreover, $s(z)=(\sqrt{1-z^2},0,z)$.

A different kind of example is obtained by generalizing the natural order on the natural numbers, namely $m\leq n$ if and only if there exists $k$ such that $m+k=n$. Here, instead of commutativity and associativity, we will require the mediality law, also know as the midle interchange law, $$(x+y)+(z+w)=(x+z)+(y+w).$$ Let $(B,+,0)$ be a unitary magma satisfying the medial law, then the set $A=\{(x,b)\in B\times B\mid  x\geq b\}$ is a submagma of the cartesian product magma $B\times B$ where the relation $\geq$ is defined by $x\geq b$ if and only if there exists $ u\in B$ such that $x=u+b$. In this situation, $A$ becomes a retraction point from $B$ to itself with $q(x,b)=u$ for any choice of $u\in B$ such that $x=u+b$  as soon as $q(b,b)=0$ and $q(x,0)=x$; moreover,  $k(x)=(x,0)$, $p(x,b)=b$ and $s(b)=(b,b)$, as illustrated. 
\begin{equation}\label{diag: eg3}
\xymatrix{B \ar@{=}[d]_{} \ar[r]^(.44){\langle 1,0 \rangle} & B\rtimes_{\varphi} B  \ar@<-.5ex>[d]_{f} \ar@<.5ex>[r]^{\pi_2} & B \ar@{=}[d]_{} \ar@<.5ex>[l]^(.44){\langle 0,1\rangle}
\\
B\ar@<-.5ex>[r]_{k} & A \ar@<-.5ex>[u]_{g} \ar@{.>}@<-.5ex>[l]_{q} \ar@<.5ex>[r]^{p} & B \ar@<.5ex>[l]^{s}}
\end{equation}
Furthermore, if writing $q(x,b)$ as $(x-b)\in B$ whenever $x\geq b$ then the corresponding action is $\varphi(u,b,u',b')=((u+u')+(b+b'))-(b+b')$ and it is clear that the pairs $(u,b)\in B\rtimes_{\varphi} B$  are precisely those pairs $(u,b)$ for which $(u+b)-b=u$. The isomorphism between $A$ and $B\rtimes_{\varphi} B$ is observed as $g(x,b)=(x-b,b)$ and $f(u,b)=(u+b,b)$.

Upon identifying some traces of associativity, certain simplifications in the formula of $\varphi$ become apparent. When we say $k+(s+k)=(k+s)+k$ we really mean to say that $k(x)+(s(b)+k(x'))=(k(x)+s(b))+k(x')$ for all $x,x'\in X$ and $b\in B$, as well as for the similar situations in the following result.  

\begin{proposition}\label{prop: particular cases}
    Let $(A,k,q,s,p)$ be a retraction point from $X$ to $(B,+,0)$ and consider $\varphi=q((k+s)+(k+s))$ its classifying action. For
    \begin{align}
        x+_{b'}x':=\varphi(x,0,x',b')\\
        \xi^{x}(b,x'):=\varphi(x,b,x',0)\\
        \xi_{b'}(b,x'):=\varphi(0,b,x',b')\\
        \rho^{b}_{b'}(x):=\varphi(x,b,0,b')\\
        x+x':=\varphi(x,0,x',0)\\
        \xi(b,x'):=\varphi(0,b,x',0)\\
        \rho_{b'}(x):=\varphi(x,0,0,b')
    \end{align}
    \begin{enumerate}
        \item \emph{(kks)} if $k+(k+s)=(k+k)+s$ then
        \begin{equation}
            x+_{b'}x'=\rho_{b'}(x+x')
        \end{equation}
        \item (sks) if $s+(k+s)=(s+k)+s$ then
        \begin{equation}
            \xi_{b'}(b,x')=\rho^{b}_{b'}(\xi(b,x'))
        \end{equation}
        \item (1ks) if $1_A+(k+s)=(1_A+k)+s$ then
        \begin{equation}
            \varphi(x,b,x',b')=\rho^{b}_{b'}(\xi^{x}(b,x'))
        \end{equation}
        \item (kss) if $k+(s+s)=(k+s)+s$ then
        \begin{equation}
            \rho^{b}_{b'}(x)=\rho_{b+b'}(x)
        \end{equation}
        \item (ksk) if $k+(s+k)=(k+s)+k$ then
        \begin{equation}
            \xi^{x}(b,x')=x+_{b}\xi(b,x')
        \end{equation}
        \item (ks1) if $k+(s+1_A)=(k+s)+1_A$ then
        \begin{equation}
            \varphi(x,b,x',b')=x+_{b+b'}\xi_{b'}(b,x')
        \end{equation}
        \item (kks,ksk) if $k+(k+s)=(k+k)+s$ and $k+(s+k)=(k+s)+k$  then
        \begin{equation}
            \xi^{x}(b,x')=\rho_{b}(x+\xi(b,x'))
        \end{equation}
        \item (sks,kss) if $s+(k+s)=(s+k)+s$ and $k+(s+s)=(k+s)+s$  then
        \begin{equation}
            \xi_{b'}(b,x')=\rho_{b+b'}(\xi(b,x'))
        \end{equation}
        \item (1ks,kss,ksk) if $1_A+(k+s)=(1_A+k)+s$, $k+(s+s)=(k+s)+s$ and $k+(s+k)=(k+s)+k$  then
        \begin{equation}
            \varphi(x,b,x',b')=\rho_{b+b'}(x+\xi(b,x'))
        \end{equation}
                \item (ks1,kks,sks) if $k+(s+1_A)=(k+s)+1_A$, $k+(k+s)=(k+k)+s$ and $s+(k+s)=(s+k)+s$  then
        \begin{equation}
            \varphi(x,b,x',b')=\rho_{b+b'}(x+\xi(b,x'))
        \end{equation}
    \end{enumerate}
\end{proposition}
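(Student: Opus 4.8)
The plan is to unwind each hypothesis of the form ``associativity on a triple of generators'' directly into the defining formula $\varphi=q((k+s)+(k+s))$ and simplify, using the basic identities of Proposition~\ref{prop: retraction point properties}(1) (in particular $kq(0)=0$, $pk=0$, $qs=0$, $p$ and $s$ being morphisms, and $qk=1_X$) to collapse the intermediate $q$-applications. The seven abbreviations $x+_{b'}x'$, $\xi^x$, $\xi_{b'}$, $\rho^b_{b'}$, $x+x'$, $\xi$, $\rho_{b'}$ are just the restrictions of $\varphi$ obtained by setting some of the four arguments to their neutral values, so each claimed identity is really an equality between two such restrictions, and the proof of each item is a short rewriting. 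I would set up once and for all the observation that for $a,a'\in A$ one has $q(a+a')$ unchanged if we replace $a$ by $kq(a)+sp(a)$ (trivially) and, more usefully, that $q((k(x)+s(b))+a')$ depends on $(x,b)$ only through $k(x)+s(b)$; this is what lets the hypotheses ``$k+(k+s)=(k+k)+s$'' etc. be applied after re-expressing $\varphi$.

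For a representative case, take item (1), the hypothesis (kks): $k(x_1)+(k(x_2)+s(b'))=(k(x_1)+k(x_2))+s(b')$ for all $x_1,x_2,b'$. We compute $x+_{b'}x'=\varphi(x,0,x',b')=q((k(x)+s(0))+(k(x')+s(b')))=q(k(x)+(k(x')+s(b')))$ since $s(0)=0$; applying (kks) this equals $q((k(x)+k(x'))+s(b'))$. Now $k(x)+k(x')=kq(k(x)+k(x'))+sp(k(x)+k(x'))=k(x+x')+s(0)=k(x+x')$, using $pk=0$ and the definition $x+x'=q(k(x)+k(x'))$; hence $x+_{b'}x'=q(k(x+x')+s(b'))$. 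On the other hand $\rho_{b'}(y)=\varphi(y,0,0,b')=q((k(y)+s(0))+(k(0)+s(b')))=q(k(y)+s(b'))$ since $k(0)=kq(0)=0$. Taking $y=x+x'$ gives $\rho_{b'}(x+x')=q(k(x+x')+s(b'))=x+_{b'}x'$, which is (1). The remaining items follow the same pattern: rewrite both sides of the claimed identity as instances of $\varphi$, plug in the hypothesized re-bracketing applied to $(k+s)+(k+s)$ with the appropriate arguments, and use $pk=0$, $qs=0$, $s$ a morphism, and $q(k(u)+s(v))=\varphi_{00}(u,v)$ to match. Items (7)--(11) are obtained by chaining two or three of the earlier identities: e.g. (7) = (5) fed into (1) via $\xi^x(b,x')=x+_b\xi(b,x')=\rho_b(x+\xi(b,x'))$; and (9), (10) combine (3) with (5),(4) and with (6),(1),(2) respectively, after noting that the listed triple hypotheses imply the pairwise ones needed.

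The main obstacle is bookkeeping rather than conceptual: one must be careful that a hypothesis stated as ``associativity of $k+(s+k)=(k+s)+k$ for all arguments'' is genuinely what is needed to rewrite the particular bracketing that appears inside $q((k+s)+(k+s))$ after some arguments have been specialized to $0$, and that specializing an argument to $0$ on the $k$-side produces $k(0)=0$ (so that term can be absorbed or dropped) while specializing on the $s$-side produces $s(0)=0$ likewise — so the ``associativity'' is being applied to a genuine three-term expression in nonzero generators, not to a degenerate one. I would therefore first record the reduction $\varphi(x,0,x',0)=q(k(x)+k(x'))$, $\varphi(x,0,0,b')=q(k(x)+s(b'))=\varphi_{00}(x,b')$, $\varphi(0,b,x',0)=q(s(b)+k(x'))$, $\varphi(x,b,x',0)=q((k(x)+s(b))+k(x'))$, $\varphi(0,b,x',b')=q(s(b)+(k(x')+s(b')))$, $\varphi(x,b,0,b')=q((k(x)+s(b))+s(b'))$, and then each of the eleven items is a two-or-three-line manipulation using exactly the indicated hypotheses together with Proposition~\ref{prop: retraction point properties}(1); the consistency of the two formulas in (9) and (10) is itself a corollary of the computation. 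I would present items (1)--(6) in full and then indicate that (7)--(11) follow by substitution, to keep the proof at a reasonable length.
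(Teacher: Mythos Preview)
Your approach is correct and is essentially the paper's: direct rewriting inside $q((k+s)+(k+s))$ using the assumed rebracketing identity together with the retraction identity $kq+sp=1_A$ (and its consequence $k(x)+k(x')=k(x+x')$). The paper only writes out item (10), manipulating a general $a+a'$ and then specialising to $a=k(x)+s(b)$, $a'=k(x')+s(b')$, whereas you specialise first; the computations coincide. Your treatment of item (1) is correct and items (2)--(6), (7), (8) go through exactly as you describe.

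One small warning about your shortcut for (9) and (10): the naive chain does not close by itself. For (9), applying (3), (4), (5) and then (1) (available since (1ks) $\Rightarrow$ (kks)) gives
\[
\varphi(x,b,x',b')=\rho_{b+b'}\bigl(\rho_b(x+\xi(b,x'))\bigr),
\]
with a spurious inner $\rho_b$. You still need the observation that, under (kss), $\rho_{b+b'}\circ\rho_b=\rho_{b+b'}$: from the retraction identity one has $k\rho_b(u)+s(b)=k(u)+s(b)$, and then (kss) yields $k\rho_b(u)+s(b+b')=k(u)+s(b+b')$, whence $q$ of both sides gives the collapse. An analogous step (using (kks) to get $\rho_c(x+\rho_c(y))=\rho_c(x+y)$) is needed in (10). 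Since you also say that each item admits a direct two-line manipulation in the style of your item (1), and that route works cleanly for (9) and (10) (and is exactly what the paper does for (10)), this is only a wrinkle in the presentation, not in the argument. (Also: there are ten items, not eleven.)
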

\begin{proof}
We will prove the last item, the others being similar. First, we observe that every $a+a'\in A$ can be written as
\begin{align*}
    a+a' &=(kq(a)+sp(a))+(kq(a')+sp(a'))\qquad  \text{retraction point}\\
    &=kq(a)+\left((sp(a)+kq(a'))+sp(a')\right)\qquad \text{ks1, sks}\\
    &=kq(a)+\left((kq(sp(a)+kq(a')) +sp(a))+sp(a')\right)\qquad \text{retraction point}\\
    &=k(q(a)+q(sp(a)+kq(a'))) +s(p(a)+p(a'))\qquad \text{ks1, kks.}
\end{align*}
Then, for $a=k(x)+s(b)$ and $a'=k(x')+s(b')$ we have $\varphi(x,b,x',b')=q(a+a')$ and as a consequence $\varphi(x,b,x',b')=\varphi(\varphi(x,0,\varphi(0,b,x',0),0),0,0,b+b')=\rho_{b+b'}(x+\xi(b,x'))$.
\end{proof}

The following special cases are noteworthy. By a left loop we mean a unitary magma $(B,+,0)$ with the property that for every $x,b\in B$ there exists a unique $u\in B$ such that $x=u+b$. We will also write $u$ as $x-b$ and observe that $b-b=0$, $b-0=b$, $(x-b)+b=x$ and $(u+b)-b=u$. 

\begin{proposition}\label{prop: particular cases}
    Let $(A,k,q,s,p)$ be a retraction point from $X$ to $(B,+,0)$ and consider $\varphi=q((k+s)+(k+s))$ its classifying action. For
    \begin{align}
        x+x':=\varphi(x,0,x',0)\\
        \xi(b,x'):=\varphi(0,b,x',0)\\
        \rho_{b'}(x):=\varphi(x,0,0,b')
    \end{align}
    \begin{enumerate}
        \item if $A=(A,+,0)$ is a monoid then:        
        \begin{enumerate} 
        \item $(X,+,q(0))$ and $(B,+,0)$ are monoids 
        \item $A$ is bijective to $$\{(x,b)\in X\times B\mid \rho_{b}(x)=x\}$$ via the assignments  $a\mapsto (q(a),p(a))$ and $(x,b)\mapsto k(s)+s(b)$
            \item  $\varphi(x,b,x',b')=\rho_{b+b'}(x+\xi(b,x'))$
            \item $(A,k,q,s,p)$ is composable with every retraction point of the form $(B,k',q',s',p')$, with $B$ the fixed unitary magma under consideration
        \end{enumerate}
        
        \item if $A=(A,+,0)$ is a left loop then
        \begin{enumerate}
            \item $kq(a)=a-sp(a)$ for all $a\in A$
            \item 
            $k\varphi(x,b,x',b')=((k(x)+s(b))+(k(x')+s(b')))-s(b+b')$
            \item $\rho_{b}(x)=x$, for all $x\in X$ and $b\in B$
            \item $k\xi(b,x')=(s(b)+k(x))-s(b)$, for all $x\in X$ and $b\in B$
            \item $A$ is bijective to $X\times B$ via the assignments  $a\mapsto (q(a),p(a))$ and $(x,b)\mapsto k(s)+s(b)$.
        \end{enumerate}
        \end{enumerate}
\end{proposition}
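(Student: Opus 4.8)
The plan is to assemble the statement from results already established, the only genuinely new input being one cancellation identity in the left-loop case.

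For part~1, assume $(A,+,0)$ is associative. For~(a): $p$ is a surjective morphism of unitary magmas (as $ps=1_B$), so $(B,+,0)$ inherits associativity as a quotient of $A$; for $(X,+,q(0))$, I would first check that $k(X)$ is a submagma of $A$, namely that since $p(k(x)+k(x'))=0$ the semibiproduct identity forces $k(x)+k(x')=kq(k(x)+k(x'))=k(x+x')$, so that $k$ is an isomorphism of unitary magmas from $X$ onto the submonoid $k(X)\subseteq A$ and hence $X$ is associative with neutral element $q(0)$. Part~(b) is the general correspondence: by Propositions~\ref{prop: every retraction point is an action}, \ref{prop: every action is a retraction point} and~\ref{prop: main thm}, $F_{X,B}(\Phi_{X,B}(A))$ and $A$ have the same classifying action, hence are $\sim$-equivalent and therefore isomorphic by Proposition~\ref{prop: ssfl for ptr}; tracing through the maps in the proof of Proposition~\ref{prop: main thm} yields the explicit mutually inverse bijections $a\mapsto(q(a),p(a))$ and $(x,b)\mapsto k(x)+s(b)$, while by definition the underlying set of $X\rtimes_{\varphi} B$ is $\{(x,b)\mid\varphi(x,0,0,b)=x\}=\{(x,b)\mid\rho_b(x)=x\}$. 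For~(c): associativity makes each of the three hypotheses ``1ks'', ``kss'', ``ksk'' of the preceding proposition hold, so its corresponding item gives $\varphi(x,b,x',b')=\rho_{b+b'}(x+\xi(b,x'))$. For~(d): since $s$ is a morphism and $A$ is associative, $k(x)+s(b+b')=k(x)+(s(b)+s(b'))=(k(x)+s(b))+s(b')$ for all $x\in X$, $b,b'\in B$, which is exactly the hypothesis of the seventh item of Proposition~\ref{prop: retraction point properties}; composability with every $(B,k',q',s',p')$ follows.

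For part~2, assume $(A,+,0)$ is a left loop. The key step is~(a): for a fixed $a\in A$ the element $u=kq(a)$ satisfies $u+sp(a)=a$ by the semibiproduct identity, and uniqueness of such $u$ yields $kq(a)=a-sp(a)$. Parts~(b) and~(d) are then evaluations of $q$ on particular elements: for $a=(k(x)+s(b))+(k(x')+s(b'))$ one computes $p(a)=(0+b)+(0+b')=b+b'$, so $sp(a)=s(b+b')$, and since $q(a)=\varphi(x,b,x',b')$ part~(a) gives $k\varphi(x,b,x',b')=a-s(b+b')$; the identity in~(d) is the same computation for $a=s(b)+k(x')$, where $p(a)=b$. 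For~(c): with $a=k(x)+s(b)$ (using $k(0)=s(0)=0$ to identify it) we have $p(a)=b$, so~(a) together with the left-loop identity $(u+s(b))-s(b)=u$ gives $kq(k(x)+s(b))=k(x)$, i.e.\ $q(k(x)+s(b))=x$, whence $\rho_b(x)=\varphi(x,0,0,b)=q(k(x)+s(b))=x$. Finally~(e): the identity $q(k(x)+s(b))=x$ just obtained is precisely the criterion in the last item of Proposition~\ref{prop: retraction point properties} for $\langle q,p\rangle\colon A\to X\times B$ to be a bijection, and its inverse is visibly $(x,b)\mapsto k(x)+s(b)$.

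I expect no real obstacle: the only computation not already in hand is~2(a), which is immediate from uniqueness in the left-loop axiom, and every remaining item is either a direct citation of an earlier result or a one-line calculation with $p$, $q$ and the semibiproduct identity $kq+sp=1_A$. The most delicate point is bookkeeping rather than mathematics---in~1(a), handling the tacit unitary-magma structure on $X$ (neutral element $q(0)$, operation $x+x'=q(k(x)+k(x'))$) and confirming that $k(X)$ is closed under $+$, so that the associativity of $A$ indeed transfers to $X$.
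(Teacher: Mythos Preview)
The paper states this proposition without proof, so there is no authorial argument to compare against; your proposal is correct and is precisely the argument the paper's architecture invites. Each item is either a direct citation (1(b) via Propositions~\ref{prop: main thm} and~\ref{prop: ssfl for ptr}, 1(c) via item~9 of the preceding proposition, 1(d) via item~7 of Proposition~\ref{prop: retraction point properties}, 2(e) via item~8 of the same) or a one-line computation with the semibiproduct identity, and the only genuinely new step---2(a), using uniqueness in the left-loop axiom---is handled cleanly. The care you take in 1(a) to verify that $k(X)$ is closed under $+$ (via $p(k(x)+k(x'))=0$ and the identity $kq+sp=1_A$) is exactly the point that needs checking and is done correctly.
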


In a left loop $(A,+,0)$, the condition $(u+b)-b=u$ holds due to the uniqueness of solutions for equations of the form $u+b=x$, where $x,b\in A$. However, if we consider an additional operation $x-b$ on $A$ satisfying $b-b=0$ and $(x-b)+b=x$ but not necessarily $(u+b)-b=u$, we encounter what can be designated as a semi-left-loop, a left-loop in which the condition $(u+b)-b=u$ is not satisfied.
Even though we would be dealing with a semi-abelian algebra (\cite{bourn2002}), $A$ would not be bijective to the Cartesian product $X\times B$. Instead, there would exist a bijection between $A$ and $\{(x,b)\in X\times B\mid (k(x)+s(b)))-s(b)=k(x)\}$. For further details on this topic, we refer to \cite{gray}.

\paragraph{Acknowlegements:} this work has previously been funded by FCT/MCTES (PIDDAC) through the projects: Associate Laboratory ARISE LA-P-0112-2020; UIDP-04044-2020; UI\-DB-04044-2020; PAMI–ROTEIRO-0328-2013 (022-158); MATIS (CENTRO-01-0145-FEDER-000014 - 3362); CENTRO-01-0247-FED\-ER-(069665, 039969); as well as POCI-01-0247-FE\-DER-(069603, 039958, 03\-9863, 024533); by CDRSP and ESTG from the Polytechnic of Leiria.
Furthermore, the author acknowledges the project Fruit.PV and Fundação para a Ciência e a Tecnologia (FCT) for providing financial support through the CDRSP Base Funding project (DOI: 10.54499/UIDB/04044/2020).

\end{document}